\newtheorem{thm}{Theorem}[section]
\newtheorem{cor}[thm]{Corollary}
\newtheorem{prop}[thm]{Proposition}
\newtheorem{definition}[thm]{Definition}
\newtheorem{remark}[thm]{Remark}
\newtheorem{ex}[thm]{Example}
\newtheorem{nota}[thm]{Notation}
\newtheorem{notarem}[thm]{Notation and Remarks}
\newtheorem{Main Results}[thm]{Main Results}
\newtheorem{Result}[thm]{Result}
\DeclareMathOperator{\tr}{tr}
\definecolor{dgreen}{RGB}{0,128,0}
\newcommand{\orb}{\mathcal O}
\newcommand{\R}{\mathbf R}
\newcommand{\Z}{\mathbf Z}
\newcommand{\bs}{\backslash}
\newcommand{\vol}{{\rm vol}}
\newcommand{\iso}{\iiso}
\newcommand{\restr}[1]{\lower0.4ex\hbox{$|$}\lower0.7ex
	\hbox{$\scriptstyle{#1}$}}
\definecolor{olive}{RGB}{128,128,0}
\definecolor{orange}{rgb}{1,0.5,0}
\definecolor{darkpastelpurple}{rgb}{0.59,0.44,0.84}
\newcommand{\cc}[1][U]{(\widetilde{#1},G_{#1}, \pi_{#1})}
\newcommand{\wtu}{\widetilde{U}}
\newcommand{\iiso}{\operatorname{Iso}}
\newcommand{\isomax}{\iiso^{\rm max}}
\newcommand{\coloneqq}{:=}
\newcommand{\End}{\operatorname{End}}
  \def\R{\mathbb R} \def\Z{\mathbb{Z}}
\def\O{{\mathcal{O}}}
\def\bs{{\backslash}}
\def\Iso{{\operatorname{{Iso}}}}
\newcommand{\codim}{\operatorname{codim}}
\def\Id{{\operatorname{{Id}}}}
\def\p{(p)}
\begin{document}

\title{Do the Hodge spectra distinguish orbifolds from manifolds?\\ Part 2}

\author{Katie Gittins}
\address{Department of Mathematical Sciences, Durham University,
Mathematical Sciences \& Computer Science Building,
Upper Mountjoy Campus, Stockton Road,
Durham DH1 3LE,
United Kingdom.}
\email{katie.gittins@durham.ac.uk}

\author{Carolyn Gordon}
\address{Department of Mathematics, Dartmouth College, Hanover, NH, 03755, USA.}
\email{csgordon@dartmouth.edu}

\author{Ingrid Membrillo Solis}
\address{Mathematical Sciences, University of Southampton,  Southampton SO17~1BJ, United Kingdom}
\email{i.membrillo-solis@soton.ac.uk}

\author{Juan Pablo Rossetti}
\address{Universidad Nacional de Cordoba, Medina Allende s/n, Ciudad Universitaria, 5000 Cordoba, Argentina. 
}
\email{jprossetti@unc.edu.ar}

\author{Mary Sandoval}
\address{Department of Mathematics, Trinity College, 300 Summit Street, Hartford, CT, 06106, USA.}
\email{mary.sandoval@trincoll.edu}

\author{Elizabeth Stanhope}
\address{Department of Mathematical Sciences, Lewis \& Clark College, Portland, OR, 97219, USA.}
\email{stanhope@lclark.edu}

\subjclass[2020]{Primary: 58J53; Secondary: 53C20 58J50 58J37}
\keywords{Hodge Laplacian, orbifolds, isospectrality}

\maketitle

\begin{abstract}
In \cite{GGKM-SSS} we examined the relationship between the singular set of a compact Riemannian orbifold and the spectrum of the Hodge Laplacian on $p$-forms by computing the heat invariants associated to the $p$-spectrum. We showed that the heat invariants of the $0$-spectrum together with those of the $1$-spectrum for the corresponding Hodge Laplacians are sufficient to distinguish orbifolds from manifolds as long as the singular sets have codimension $\le 3.$ This is enough to distinguish orbifolds from manifolds for dimension $\le 3.$ Here we give both positive and negative inverse spectral results for the individual $p$-spectra considered separately. For example, we give conditions on the codimension of the singular set which guarantee that the volume of the singular set is determined, and in many cases we show by providing counterexamples that the conditions are sharp.

\end{abstract}

\section{Introduction}

A (Riemannian) orbifold is a generalization of a (Riemannian) manifold that  permits  the  presence of mild singularities, in the following sense: orbifolds of dimension $d$ are locally modeled by the orbit spaces of finite effective group actions on $\R^d$.  Orbifold singularities correspond to orbits with non-trivial isotropy and thus each singularity has an associated ``isotropy type.'' Orbifolds appear in a variety of mathematical areas and have applications in physics, in particular to string theory. In this paper, we will always assume all orbifolds under consideration are connected and compact without boundary.

The notions of the Laplace-Beltrami operator and the Hodge Laplacian extend to Riemannian orbifolds so it is natural to extend many questions from the spectral theory of manifolds to orbifolds. This paper will be concerned with inverse spectral problems for the spectrum of the Hodge Laplacian on $p$-forms, which we refer to as the $p$-spectrum. We will say that two closed Riemannian orbifolds are $p$-\emph{isospectral} if their Hodge Laplacians acting on $p$-forms are isospectral.  In particular, $0$-isospectrality means that the Laplace-Beltrami operators are isospectral.
%

 We will focus on both negative results--what properties of orbifolds cannot be detected by the $p$-spectra, and positive results--what properties of orbifolds or manifolds can be heard by the $p$-spectra for various $p$. Because  the possible presence of singular  points is a defining  characteristic  of  the class of  orbifolds,  we will be particularly interested in the question ``Can one hear the singularities of an orbifold in the $p$-spectrum?"  More precisely, one asks for various values of $p$:

\begin{enumerate}
\item \label{it:orbimani} Does the $p$-spectrum distinguish orbifolds with singularities from manifolds? If so, does the $p$-spectrum detect the topology and geometry of the set of singular points, including the isotropy types of singularities?
\item What other geometric or topological properties does the $p$-spectrum detect or fail to detect for orbifolds?
\end{enumerate}

Many authors have addressed these questions for the spectrum of the Laplace-Beltrami operator (see, for example, the literature review in \cite{GGKM-SSS} and corresponding references).   However, the question of whether the $0$-spectrum 
\emph{always} distinguishes Riemannian orbifolds with singularities from Riemannian manifolds remains open.   In the first part of this project, \cite{GGKM-SSS}, we showed that the heat invariants for the $0$-spectrum and the $1$-spectrum together can distinguish orbifolds from manifolds when the codimension of the singular set is less than or equal to three. In this paper, we will consider the inverse spectral problem for individual $p$-spectra, obtaining both positive and negative results.

\subsection{Main Results}

We summarize our main results below. 
As stating the results in full detail requires us to introduce various definitions and notation, we give an overview here and refer to the complete statements of the results (via citations in parentheses) that appear later in the paper.

First recall that orbifolds admit a natural stratification. In a connected orbifold, the collection of regular points forms a single stratum of full dimension, while the singular set is a union of lower-dimensional strata.  By the \emph{dimension} $m$ of the singular set, we mean the maximum dimension of the singular strata.    The volume of the singular set is then understood to be its $m$-dimensional Hausdorff measure.   

\subsubsection{Results for arbitrary $p$-spectra.}
Our results refer to the combinatorial Krawtchouk polynomials $K^d_p$; see subsection~\ref{sec.computation} for the definition. These polynomials depend on a positive integer parameter $d$ and a parameter $p$ lying in $\{0,\dots,d\}$.  
There is a large literature on the zeros of these polynomials (e.g., see \cite{CS90,KL96}).

The first item in Result~\ref{gen p} below is motivated by and applies the work of Roberto Miatello and Juan Pablo Rossetti \cite{MR01}, where the Krawtchouk polynomials were used to construct $p$-isospectral Bieberbach manifolds. 
\begin{Result}\label{gen p} Let ${\mathcal Orb}^d_k$ denote the class of all closed $d$-dimensional Riemannian orbifolds with singular set of codimension $k$.   
\begin{enumerate}
\item If $k\in \{1,\dots, d-1\}$ is a zero of the Krawtchouk polynomial $K^d_p$, then there exists a $d$-dimensional flat orbifold $\O\in {\mathcal Orb}^d_k$ and a Bieberbach manifold $M$ such that $\O$ is $p$-isospectral to $M$. Moreover there exist families of mutually isospectral flat orbifolds and manifolds some of which are orientable and others not. (See Proposition~\ref{it:flat-involution}.)
 \item In contrast, if $k$ is odd and $K^d_p(k)\neq 0$, then the $p$-spectrum determines the volume of the singular set of each orbifold in ${\mathcal Orb}^d_k$.   In particular, the $p$-spectrum distinguishes orbifolds in ${\mathcal Orb}^d_k$ from closed Riemannian manifolds. (See Theorem~\ref{thm:krawt4}.)
    \end{enumerate}  
\end{Result}

 The first item in Result~\ref{gen p} yields many examples of Riemannian orbifolds that are $p$-isospectral to manifolds for various $p\neq 0$, including examples that are simultaneously $p$-isospectral for all odd values of $p$.  Moreover, the codimension of the singular sets in the various examples can have either even or odd parity.  To our knowledge, the only previously known examples \cite{GR03} of isospectralities between orbifolds with singularities and manifolds were in the special case that $d$ is even and $p=\frac{d}{2}$. See the discussion~\ref{subsub middle} below concerning isospectrality in the middle degree.

Consider the case of orbifolds $\O$ with singular set of codimension one.     Result~\ref{gen p}, together with an examination of the zeros of the Krawtchouk polynomials, says in this case that the $p$-spectrum determines the volume of the singular set unless the dimension $d$ of $\O$ is even and $p=\frac{d}{2}$.   Specializing a little further, suppose that \emph{all} the singular strata have codimension one.  The underlying space of $\O$ is then a Riemannian manifold $M$ with boundary; the singular set of $\O$ (which consists of what are called ``reflector edges'') corresponds to the manifold boundary.  Moreover, the $p$-spectrum of the orbifold coincides with the $p$-spectrum of $M$ with absolute boundary conditions (Neumann boundary conditions if $p=0$.)   (See, e.g., Remark~3.16 of \cite{GGKM-SSS}.)  Result~\ref{gen p} coincides in this special case with the well-known consequence of the heat invariants stating that, except when the dimension $d$ is even and $p=\frac{d}{2}$, the $p$-spectrum of a compact Riemannian manifold determines the volume of the boundary.

For $p=0$, the Krawtchouk polynomial has no zeros, so Result~\ref{gen p} says that if the singular set has odd codimension, its volume is determined by the $0$-spectrum, i.e., by the spectrum of the Laplacian on functions.   This fact can also be seen as an immediate consequence of the heat invariants for the 0-spectrum of orbifolds given in \cite{DGGW08}.    Moreover, \cite[Theorem 5.1]{DGGW08} says that an orbifold that contains at least one singular stratum of odd codimension cannot be 0-isospectral to a manifold, even if the full singular set has even codimension.  

 Our proof of the second item in Result~\ref{gen p} uses a new spectral invariant for the $p$-spectrum of orbifolds that we introduce in Notation~\ref{parity} and Proposition~\ref{prop: odd par}.    A necessary (but not sufficient) condition for an orbifold to be $p$-isospsectral to a manifold is that this invariant vanishes.     The expression for the invariant is somewhat simpler to work with when $p=1$ than for higher $p$ and enables us to obtain weak -- but in some cases sharp -- analogues for 1-forms of the result \cite[Theorem 5.1]{DGGW08} described in the previous paragraph.   We discuss these and other results for the 1-spectrum next.

\subsubsection{Results for the $1$-spectrum.}

We obtain the following positive results for the  $1$-spectrum for orbifolds of arbitrary dimension $d$ (see Theorem~\ref{thm:halfdim}.)   We also show that some of the results below are sharp.

\begin{Result}\label{R5}
Let $\orb$ be a closed Riemannian orbifold of dimension $d$.
\begin{enumerate}

\item\label{DGGWp}  If $\orb$ contains at least one primary singular stratum of odd codimension $k<\frac{d}{2}$, then $\orb$ cannot be 1-isospectral to any closed Riemannian manifold.

\item\label{GRSp} If $\orb$ contains at least one singular stratum of codimension $k<\frac{d}{2}$, then $\O$ cannot be $1$-isospectral to any closed Riemannian manifold
$M$ such that $M$ and $\O$ have isometric infinite homogeneous Riemannian covers.  If in addition $\orb$ is good, then $\O$ cannot be $1$-isospectral to any closed Riemannian manifold $M$ such that $M$ and $\O$ have finite $1$-isospectral Riemannian covers.

\item 
For $d$ even, both statements remain true when $k=\frac{d}{2}$ provided at least one stratum of codimension $k$ has isotropy group of order at least three.

\item 
An element of ${\mathcal Orb}^d_k$ with $k$ odd can be 1-isospectral to a Riemannian manifold only if $d$ is even and $k=\frac{d}{2}$.

\end{enumerate}

\end{Result}

The conditions imposed on the codimension in the first two items of  Result~\ref{R5} are sharp at least for all even-dimensional orbifolds, and the assumption on the isotropy order in the third item cannot be removed. Indeed, when $d$ is even, the first item in Result~\ref{gen p} allows one to construct a $d$-dimensional  orbifold $\O$ with singular set of codimension $\frac{d}{2}$ and a manifold $M$ such that $\O$ and $M$ are $p$-isospectral for all odd $p$, in particular for $p=1$.  (See Example \ref{cor.ornon} and Remark~\ref{rm:zeros_of_K}, part 5.)

Item \ref{DGGWp} of Result~\ref{R5} is the weak analogue for the $1$-spectrum of \cite[Theorem 5.1]{DGGW08}, and
Item \ref{GRSp} of Result~\ref{R5} gives weak analogues for $1$-forms of results for functions from \cite{GR03} and its errata \cite{GR21} and \cite[Theorem 1.2]{S10}.

\subsubsection{Results for the middle-degree spectrum}\label{subsub middle}

As discussed in \cite{GR03} and its errata \cite{GR21},  the middle degree $p=\frac{d}{2}$ Hodge spectrum of even-dimensional Riemannian manifolds, or more generally orbifolds, contains less information than the other spectra due to Hodge duality.  This weakness of the middle-degree spectrum is also apparent in Result~\ref{gen p}: when $p=\frac{d}{2}$, the Krawtchouk polynomial $K_p^d(k)=0$ for every odd integer $k$ in the interval $[1,d-1]$; Krawtchouk polynomials for other values of $p$ have far fewer integer zeros.

\begin{Result}\label{R3}
    We exhibit a family of five $1$-isospectral flat 2-orbifolds (none of which are $0$-isospectral) that have different types of singularities. (See Example \ref{ex:isospecorb}.)

\begin{enumerate}
\item One can be viewed as a Euclidean square with absolute boundary conditions.
\item The underlying spaces of the various orbifolds include a sphere, a projective plane, and disks.
\item Some have reflector edges of various lengths while another has only cone points. Thus this family contains locally orientable and non-locally orientable orbifolds, in contrast to a result from \cite{RS20} showing that this cannot happen with $0$-isospectral orbifolds. (See Remark~\ref{rm:orbori} for the notion of local orientability of orbifolds.)
\end{enumerate}
\end{Result}

Note that a pair of 1-isospectral 2-orbifolds, one locally orientable and one not, were already constructed in \cite{GR03}. See Example~\ref{ex:d/2} for details. Also, in contrast to these five 1-isospectral 2-orbifolds which are not 0-isospectral, in \cite{RSW08} the authors give examples of $3$-orbifolds that are $0$-isospectral and are not $1$-isospectral (see Remark~\ref{rm:must-be-odd}). 

\subsection{Plan of the paper.}
This paper is organized as follows: Section 2 recalls the relevant definitions, notation, and results from \cite{GGKM-SSS}, and the Krawtchouk polynomials that will be used throughout. Section 3 contains proofs of all the negative inverse spectral results; Section 4 contains proofs of all the positive inverse spectral results.

\section{Preliminaries}\label{sec:newbackground}
We begin this section by recalling the definition and some basic properties of orbifolds.  See \cite{GGKM-SSS} or the references therein for more detail. We will then review results from \cite{GGKM-SSS} addressing heat asymptotics for the Hodge Laplacian on $p$-forms for orbifolds. In the final subsection, we will recall the definition and some properties of Krawtchouk polynomials and explore their link to the heat invariants.    

\subsection{Notation}\label{notationpartone}
Here, we define the notation needed for this paper from \cite{GGKM-SSS}. 

\begin{definition}\label{def:orbchart}
For a connected open subset $U \subseteq X$, an \emph{orbifold chart} (of dimension $d$) over $U$ is a triple $\cc$ where $\wtu \subseteq \R^d$ is a connected open subset, $G_U$ is a finite group acting on $\wtu$ effectively and by diffeomorphisms, and $\pi_U\colon \wtu \to X$ is a map inducing a homeomorphism $\wtu/G \xrightarrow{\cong} U$.

An \emph{orbifold} of dimension $d$ is a second countable Hausdorff topological space together with a maximal atlas of $d$-dimensional orbifold coordinate charts. See, for example, Definition 2.1 of \cite{GGKM-SSS} for the notion of compatibility of orbifold charts.
\end{definition}

\begin{remark}\label{rm:orbori}
While manifolds are always locally orientable, an orbifold is locally orientable if and only if the isotropy group at every point consists of orientation-preserving transformations, i.e., it is a subgroup of $SO(d)$. Global orientability of an orbifold further requires that the local charts can be compatibly oriented.  As in the manifold case, global orientability is equivalent to the existence of a nowhere vanishing continuous $d$-form.
\end{remark}
\begin{definition}\label{def:iso}
We define the isotropy type of $x$ as follows:   A chart $\cc$ about $x$ defines a smooth action of $G_{U}$ on $\widetilde U \subset \mathbb R^d$. Fix a lift $\widetilde x \in \widetilde{U}$ of $x$ and let $\Iso(\widetilde{x})$ be the isotropy subgroup of $G_U$ at $\widetilde{x}$.   The map $\gamma\mapsto  d\gamma_{\widetilde{x}}\in \End(T_{\widetilde{x}}\widetilde{U})$, defines an injective linear representation of $\Iso(\widetilde{x})$.    Every finite-dimensional linear representation of a compact Lie group is equivalent to an orthogonal representation, unique up to orthogonal equivalence.    Thus $\Iso(\widetilde{x})$ can be viewed as a subgroup of the orthogonal group $O(d)$, unique up to conjugacy.    The conjugacy class of $\Iso(\widetilde{x})$ in $O(d)$ is independent both of the choice of the lift $\widetilde{x}$ of $x$ in $\widetilde{U}$ and of the choice of chart $\cc$ and is called the \emph{isotropy type} of $x$.
\end{definition}

Next we recall the definition of the singular stratification.  See, for example, Section 2.1 of \cite{GGKM-SSS} for more detail.

\begin{definition}\label{def:singularstratification} 
Let $\orb$ be an orbifold of dimension $d$.
\begin{enumerate}
\item Every orbifold admits a stratification, as follows. We define an equivalence relation on $\orb$ by saying that two points in $\orb$ are \emph{isotropy equivalent} if they have the same isotropy type. The connected components of the isotropy equivalence classes of $\orb$ form a smooth stratification of $\orb$. When the corresponding isotropy type is non-trivial these strata are called \emph{singular strata}.  Note that in the literature, the requirement that a singular stratum be connected is sometimes dropped.
\item Let $N$ be a singular stratum in $\orb$ and let $\iiso(N)<O(d)$ denote a representative of its isotropy type.  We will refer to $\iiso(N)$ as the \emph{isotropy group} of the stratum.
\item\label{it:isomax-primary} We denote by $\isomax(N)$ the set of all elements $\gamma \in \iiso(N)$ such that the dimension of the $1$-eigenspace of $\gamma$ is equal to the dimension of $N$.  We say that a stratum $N$ is \emph{primary} if $\isomax(N)$ is non-empty.
\end{enumerate}
\end{definition}

\subsection{Differential forms, the Hodge Laplacian, and the Heat asymptotics for differential $p$-forms on orbifolds.}
Differential forms on orbifolds and the corresponding Hodge Laplacian on $p$-forms can be defined as in Section 2.2 of \cite{GGKM-SSS}. The Hodge Laplacian can be defined on the space of differential $p$-forms whose components are square-integrable, denoted by $L^2_p(\O),$ where it is essentially self-adjoint and has a discrete spectrum. We denote the closure of this operator by $\Delta^p.$ We apply the spectral theorem to this operator and denote its eigenvalues by $0\leq\lambda^{\p}_1 \leq \lambda^{\p}_2\leq\dots \to +\infty$, with associated smooth eigenforms, denoted by $(\varphi_i)_i$, which form an orthonormal basis of $ L_p^2(\O)$.

Recall if $\O$ has empty singular set then $\O$ is a closed Riemannian manifold, $M$, and the heat trace has a small-time asymptotic expansion \begin{equation}\label{manifoldasymp}
\sum_{j=0}^\infty\, e^{-\lambda_j^{\p} t} \sim_{t\to 0^+}
(4\pi t) ^{-d/2} \sum_{i=0}^{\infty} a^p_i(M) t^i,
\end{equation}
where the $a^p_j$, the so-called heat invariants, are integrals over $M$ of universal polynomials in the curvature and its covariant derivatives.
(See, for example, \cite{P70} and references therein.)

In Section 3 of \cite{GGKM-SSS}, the authors constructed the  heat kernel and the heat trace for the orbifold case. The universal expressions defining the $a_j^p$ make sense on Riemannian orbifolds as well as on manifolds.   We will use the same notation $a_j^p$ for the extension of the function $a_j^p$ to the class of all closed Riemannian orbifolds.

\begin{notarem}\label{Atrp} Let $\gamma\in O(d)$.   
\begin{enumerate}
\item We denote by $\tr_p(\gamma)$ the trace of the natural action of $\gamma$ on the space $\wedge^p(\R^d)$ of alternating $p$-tensors. 
\item Let $A_\gamma= \gamma_{|E_1(\gamma)^\perp}$, where $E_1(\gamma)<O(d)$ is the 1-eigenspace of $\gamma$.

Note that if $N$ is a singular stratum of an orbifold $\O$ and $\gamma\in \isomax(N)$, then $\dim(E_1(\gamma))=\dim(N)$ where $\dim(N)$ is the dimension of $N$ in $\O$.
\end{enumerate}

\end{notarem}

We recall the heat trace asymptotics and an explicit formula for some of the heat invariants using the previous notation.

\begin{thm}\label{thm.asympt}[Theorem 3.15,\cite{GGKM-SSS}] Let $\O$ be a closed $d$-dimensional Riemannian orbifold, let $p\in \{1,\dots, d\}$ and let $0\leq\lambda_1^{\p}\leq\lambda_2^{\p}\leq\dots \to +\infty$ be the spectrum of the Hodge Laplacian acting on smooth $p$-forms on $\O$.   The heat trace yields an asymptotic expansion as $t\to 0^+$ given by
\begin{equation}
\label{eq:traceO}
 \sum_{j=1}^{\infty}\,e^{-\lambda_j^{\p}t}\,\sim_{t\to 0^+}\, I_0^p(t)+\sum_{N\in PS(\O)} \, \frac{I_N^p(t)}{|\iso(N)|},
 \end{equation}
where $PS(\O)$ is the set of all primary singular $\O$-strata, $|\iso(N)|$ is the order of the isotropy group of $N$.  Here 
\begin{equation}\label{Ip0t}
 I_0^p(t)\coloneqq(4\pi t)^{-d/2}\sum_{k=0}^\infty\,a_k^p(\O) t^k   
\end{equation}
with $a_k^p(\O)$ given as above, and for $N\in PS(\O)$, 
\begin{equation}\label{IpNt}
 I_N^p(t):=(4\pi t)^{-\dim(N)/2}\sum_{k=0}^\infty\,b_k^p(N) t^k.   
\end{equation} 
  The coefficients $b_k^p(N)$ are of the form 
$$b_k^p(N)=\sum_{\gamma\in \isomax(N)}\, \int_N\,b_k^p(\gamma, x)\,dV(x) $$
where the $b_k^p$ are universal orthogonally invariant expressions in the germs of the Riemannian metric of $\O$ at $x$  and the action of $\gamma$.  

 In the notation of~\ref{Atrp}, we have
\begin{equation}\label{bzero}b_0^p(N)=\sum_{\gamma\in \isomax(N)}\, \frac{\tr_p(\gamma)}{\lvert \det(\Id_{\codim(N)}-A_\gamma)\rvert}.\end{equation}
\end{thm}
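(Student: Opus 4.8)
The plan is to construct the heat kernel of $\Delta^p$ on $\O$ by the standard device of averaging heat kernels over orbifold charts, and then to read off the small-time asymptotics of its trace by localizing the off-diagonal pieces at the fixed-point sets of the local isotropy actions; the leading fixed-point term is computed by the Laplace method, exactly as in the function case treated in \cite{DGGW08} but now carrying the bundle $\wedge^pT^*\O$ along. First I would fix a finite atlas of orbifold charts $(\widetilde U_\alpha,G_\alpha,\pi_\alpha)$ covering $\O$, with a subordinate partition of unity $\{\rho_\alpha\}$ lifting to $G_\alpha$-invariant functions $\widetilde\rho_\alpha$ on $\widetilde U_\alpha$, and let $\widetilde K_\alpha(t,\widetilde x,\widetilde y)$ be the heat kernel of the Hodge Laplacian on $p$-forms on the Riemannian manifold $\widetilde U_\alpha$ (enlarged to a closed manifold if necessary, which alters only $O(t^\infty)$ terms as $\too$). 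Since $L^2_p(U_\alpha)$ is naturally the space of $G_\alpha$-invariant forms in $L^2_p(\widetilde U_\alpha)$ and $\Delta^p$ restricts there to the Hodge Laplacian, uniqueness of the heat kernel together with the change-of-variables identity $\int_{U_\alpha}\rho_\alpha(\cdot)\,dV=\frac1{|G_\alpha|}\int_{\widetilde U_\alpha}\widetilde\rho_\alpha(\cdot)\,dV$ for lifted integrands yields
\[
\sum_{j}e^{-\lambda_j^{\p}t}=\sum_\alpha\frac1{|G_\alpha|}\sum_{\gamma\in G_\alpha}\int_{\widetilde U_\alpha}\widetilde\rho_\alpha(\widetilde x)\,\tr\big(\widetilde K_\alpha(t,\widetilde x,\gamma\widetilde x)\circ\gamma^{(p)}_{\widetilde x}\big)\,dV(\widetilde x),
\]
where $\gamma^{(p)}_{\widetilde x}\colon\wedge^pT^*_{\widetilde x}\to\wedge^pT^*_{\gamma\widetilde x}$ is the natural action of $\gamma$ on alternating $p$-tensors, so that the displayed composition is an endomorphism of $\wedge^pT^*_{\widetilde x}$.

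Next I would split the inner sum according to whether $\gamma=\Id$ or $\gamma\ne\Id$. The $\gamma=\Id$ terms reassemble, through the partition of unity, into $\int_\O\tr K^{\mathrm{diag}}(t,x,x)\,dV(x)$, whose classical manifold-type short-time expansion in universal local curvature invariants is precisely $I_0^p(t)=(4\pi t)^{-d/2}\sum_k a_k^p(\O)t^k$ (cf.\ \cite{P70}). For $\gamma\ne\Id$, the off-diagonal short-time asymptotics of $\widetilde K_\alpha$ contribute a Gaussian factor $(4\pi t)^{-d/2}e^{-d(\widetilde x,\gamma\widetilde x)^2/4t}$ times a classical polynomial-in-$t$ amplitude, so as $\too$ the integral concentrates on the fixed set $\Fix(\gamma)$, which is totally geodesic. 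Introducing Fermi coordinates along a component $F$ of $\Fix(\gamma)$ with normal variable $v$, one has $d(\widetilde x,\gamma\widetilde x)^2=|(\Id-A_\gamma)v|^2+O(|v|^3)$; applying the Laplace method fibrewise, the transverse Gaussian integral $\int e^{-|(\Id-A_\gamma)v|^2/4t}\,dv$, after the linear substitution $w=(\Id-A_\gamma)v$ (legitimate since $1$ is not an eigenvalue of $A_\gamma$), produces $(4\pi t)^{\codim(N)/2}|\det(\Id_{\codim(N)}-A_\gamma)|^{-1}$, which combines with the prefactor $(4\pi t)^{-d/2}$ into $(4\pi t)^{-\dim(N)/2}$, while the fibre part contributes $\tr(\Id\circ\gamma^{(p)})=\tr_p(\gamma)$ because the leading diagonal term of $\widetilde K_\alpha$ is the identity endomorphism of $\wedge^pT^*$. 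This gives the leading density $\tr_p(\gamma)/|\det(\Id_{\codim(N)}-A_\gamma)|$; the subleading terms of the amplitude and of the Taylor expansions of the metric, the distance function, and $A$ along $F$ produce the universal orthogonally-invariant densities $b^p_k(\gamma,x)$ for $k\ge1$.

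Finally I would assemble the $\gamma\ne\Id$ contributions globally. Each component $F$ of each $\Fix(\gamma)$ contains a dense open stratum $N$, which is the unique stratum with $\dim N=\dim F$ and $\gamma\in\isomax(N)$; choosing charts around points of $N$ whose group is exactly $\iso(N)$ and a compatible partition of unity, the accumulated contribution of a fixed $\gamma\in\isomax(N)$ works out to $\frac1{|\iso(N)|}(4\pi t)^{-\dim(N)/2}\sum_k\big(\int_N b^p_k(\gamma,x)\,dV(x)\big)t^k$, and summing over $\gamma\in\isomax(N)$ gives $I_N^p(t)/|\iso(N)|$; group elements whose fixed component has strictly larger dimension contribute terms of lower order in $(4\pi t)$ that are absorbed into $I_{N'}^p$ for the corresponding larger stratum $N'$. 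I expect the principal difficulty to be precisely this final step — verifying that the chart- and partition-of-unity-dependent pieces reorganize into the intrinsic sum over primary strata with the correct weight $1/|\iso(N)|$, while controlling the mutual interaction of the fixed sets of different group elements and of different dimensions so that nothing is miscounted; by comparison, the fibrewise Laplace expansion and the identification of $b^p_0$ are the bundle-valued version of well-understood fixed-point heat-trace computations.
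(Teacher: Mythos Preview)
This paper does not prove the theorem: it is stated in the preliminaries section explicitly as Theorem~3.15 of the companion paper \cite{GGKM-SSS}, and no argument is given here. There is therefore nothing in the present paper to compare your proposal against.

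That said, your outline is essentially the construction carried out in \cite{GGKM-SSS}: build the orbifold $p$-form heat kernel by averaging local manifold heat kernels over the chart isotropy groups, separate the $\gamma=\Id$ (principal) and $\gamma\ne\Id$ (singular) contributions, and expand the latter by a Laplace/stationary-phase argument in Fermi coordinates normal to $\Fix(\gamma)$, following Donnelly's fixed-point heat asymptotics and the function case in \cite{DGGW08} but now tracking the induced action on $\wedge^pT^*$. Your identification of the leading transverse integral as producing $(4\pi t)^{\codim(N)/2}|\det(\Id-A_\gamma)|^{-1}$ and of the bundle trace as $\tr_p(\gamma)$ is correct, and your assessment that the delicate step is the global reorganization of the chart-wise pieces into the intrinsic sum over primary strata with weight $1/|\iso(N)|$ is exactly right; that bookkeeping is what occupies much of Section~3 of \cite{GGKM-SSS}.
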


The asymptotic expansion~\eqref{eq:traceO} is of the form
\begin{align}
    \label{heatascoeffc}
(4\pi t)^{-d/2}\sum_{j=0}^\infty\, c^p_j(\mathcal O)\,t^{\frac{j}{2}}
\end{align}
with $c^p_j(\mathcal O)\in\R$.


To analyze the contributions from the $b_k^p(N)$ to the $c^p_j(\mathcal{O})$ coefficients, we recall the following notation from Section 2 of \cite{GGKM-SSS}.

\begin{notarem}\label{nota:Rstuff} 
For $\gamma \in O(d)$, we define the \emph{eigenvalue type} of $\gamma$ as follows:   Let $r$ be the dimension of the $(-1)$-eigenspace of $\gamma$; in particular, $r=0$ if $-1$ is not an eigenvalue.  Let $e^{\pm i \theta_j}$, $j=1,\dots, s$ be all the eigenvalues with non-trivial imaginary part, repeated according to multiplicity. Observe that the dimension of the $(+1)$-eigenspace is then $d-2s-r$. The expression $E(\theta_1, \theta_2, \dots, \theta_s;r)$ will be called the eigenvalue type of $\gamma$. When $r=0$, respectively $s=0$, we instead write $E(\theta_1, \theta_2, \dots, \theta_s;)$, respectively $E(;r)$.   

\end{notarem}

With this notation, we recall the following result,  Proposition 4.3 of \cite{GGKM-SSS}, which will be needed in the subsequent sections for results involving the $1$-spectrum.
The result follows as the $1$-trace is the trace of the matrix representation of $\gamma$ in $O(d)$ (as in Definition \ref{def:iso}).

\begin{prop}\label{b01general}[Proposition 4.3, \cite{GGKM-SSS}]  Let $N$ be a singular stratum of codimension $k$ in the $d$-dimensional closed orbifold $\O$. Suppose $\gamma \in\isomax(N)$ has eigenvalue type $E(\theta_1, \theta_2, \dots, \theta_s;r)$, using Notation~\ref{nota:Rstuff}.

Then
\begin{equation}\label{eq.b01}b_0^1(\gamma)=\left( d-k-r+\sum_{j=1}^s\,2\cos(\theta_j)\right) \left( 2^{-k}\prod_{j=1}^s\,\csc^2(\theta_j/2)\right) .\end{equation}
Here we use the convention that when $s=0$, we have $\prod_{j=1}^s\,\csc^2(\theta_j/2)=1$.
\end{prop}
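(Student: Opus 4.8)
The plan is to compute the trace $b_0^1(\gamma)$ directly from the formula \eqref{bzero} in Theorem~\ref{thm.asympt}, which for a single $\gamma\in\isomax(N)$ gives $b_0^1(\gamma)=\tr_1(\gamma)/\lvert\det(\Id_{\codim(N)}-A_\gamma)\rvert$, and then evaluate each of the two factors using the eigenvalue type $E(\theta_1,\dots,\theta_s;r)$. First I would record the normal form of $\gamma\in O(d)$: with $k=\codim(N)$, the $(+1)$-eigenspace has dimension $d-k$ (this is exactly the condition $\gamma\in\isomax(N)$, since $\dim E_1(\gamma)=\dim N$), the $(-1)$-eigenspace has dimension $r$, and there are $s$ rotation blocks $\begin{pmatrix}\cos\theta_j & -\sin\theta_j\\ \sin\theta_j & \cos\theta_j\end{pmatrix}$, so $k=r+2s$. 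As noted in the excerpt, the $1$-trace $\tr_1(\gamma)$ is just the trace of $\gamma$ acting on $\R^d$, which is $(d-k)\cdot 1 + r\cdot(-1) + \sum_{j=1}^s 2\cos\theta_j = d-k-r+\sum_{j=1}^s 2\cos\theta_j$; this is precisely the first factor in \eqref{eq.b01}.

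Next I would compute $\lvert\det(\Id_k - A_\gamma)\rvert$, where $A_\gamma$ is the restriction of $\gamma$ to the orthogonal complement of its $(+1)$-eigenspace, i.e.\ to the $k$-dimensional space carrying the $(-1)$-eigenspace and the rotation blocks. On the $(-1)$-eigenspace, $\Id - A_\gamma$ acts as $2\,\Id_r$, contributing a factor $2^r$. On each rotation block, $\Id_2 - \begin{pmatrix}\cos\theta_j & -\sin\theta_j\\ \sin\theta_j & \cos\theta_j\end{pmatrix}$ has determinant $(1-\cos\theta_j)^2+\sin^2\theta_j = 2-2\cos\theta_j = 4\sin^2(\theta_j/2)$, using the half-angle identity. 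Hence $\lvert\det(\Id_k-A_\gamma)\rvert = 2^r\prod_{j=1}^s 4\sin^2(\theta_j/2) = 2^{r+2s}\prod_{j=1}^s\sin^2(\theta_j/2) = 2^k\prod_{j=1}^s\sin^2(\theta_j/2)$, since $r+2s=k$. Taking the reciprocal gives $2^{-k}\prod_{j=1}^s\csc^2(\theta_j/2)$, which is the second factor in \eqref{eq.b01}; the stated convention that the empty product equals $1$ when $s=0$ matches the case $k=r$. Multiplying the two factors yields \eqref{eq.b01}.

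There is essentially no hard step here; the argument is a routine block-diagonal computation. The only point requiring a moment's care is the bookkeeping that $A_\gamma$ has size exactly $k=\codim(N)$ and contains all of the $(-1)$-eigenspace and all rotation blocks but none of the $(+1)$-eigenspace — this is where the hypothesis $\gamma\in\isomax(N)$ is used, via the observation in Notation and Remarks~\ref{Atrp} that $\dim E_1(\gamma)=\dim N$, equivalently $d-2s-r = d-k$. I would state this normal-form decomposition explicitly at the outset and then let the determinant and trace computations fall out block by block. (As a sanity check one can verify the formula against the scalar heat coefficient via $b_0^0(\gamma)=1/\lvert\det(\Id_k-A_\gamma)\rvert$, recovering the known $0$-form expression, which reassures that the normalization of $A_\gamma$ is the intended one.)
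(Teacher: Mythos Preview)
Your proof is correct and follows exactly the approach indicated in the paper, which simply remarks that the result follows from Equation~\eqref{bzero} together with the observation that $\tr_1(\gamma)$ is the ordinary matrix trace of $\gamma\in O(d)$. You have filled in the block-diagonal computation of both factors precisely as intended; there is nothing to add.
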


\subsection{Krawtchouk polynomials and their role in heat invariants}\label{sec.computation}

The (binary) Krawtchouk polynomial of degree $p$ is defined as
\begin{equation}\label{krawtpoly}K^d_p(x)=\sum_{j=0}^p\,(-1)^j\,\binom{x}{j}\binom{d-x}{p-j}.\end{equation}

In some settings, the Krawtchouk polynomials are related to the heat invariants. We give the following example that will be used later to prove Theorem \ref{thm:krawt4}.

\begin{ex}\label{isotropy2} ~
Suppose $N$ is a singular stratum of codimension $k$ with isotropy group of order $2$.  The generator $\gamma$ of $\Iso(N)$ must have eigenvalue type $E(;k)$, and thus
\begin{equation}\label{Krawt_trace}\tr_p(\gamma)=\sum_{j=0}^p\,(-1)^j\,\binom{k}{j}\binom{d-k}{p-j}\end{equation}
with the understanding that $\binom{m}{n}=0$ when $n>m$.     
By Theorem~\ref{thm.asympt} equation~\eqref{bzero} and equation \eqref{Krawt_trace}, we have
\begin{equation}
\label{Krawt}b_0^p(N) =\frac{\vol(N)}{2^k}K_p^d(k).
\end{equation}
\end{ex}

In particular, in the previous example, $b_0^p(N)$ vanishes if and only if the codimension $k$ of $N$ is a zero of the Krawtchouk polynomial $K_p^d$. 

\begin{remark}\label{rm:zeros_of_K}
There is a large literature on the zeros of Krawtchouk polynomials (see, for example, \cite{CS90,KL96}).  We include a few elementary observations here:
\begin{enumerate}
\item $K^d_0(k)=1$ and $K^d_d(k) = (-1)^k$.
\item $K^d_1(k)=0$ if and only if $k=\frac{d}{2}$.
\item $K^d_2(k)=0$ if and only if the dimension $d=
n^2$ is a perfect square and $k=\frac{n(n\pm 1)}{2}$.  (For later use, we observe that if $4|n$, then both zeros $\frac{n(n\pm 1)}{2}$ are even.)
\item When $d$ is even and $p=\frac{d}{2}$, we have $K^d_p(k)=0$ for all odd $k$, see \cite[Remark 3.14]{MR02}.
\item When $d$ is even and $p$ is odd, we have $K^d_p(\frac{d}{2})=0$, see \cite[Remark 3.14]{MR02}.
\item It is remarked in \cite[Remark 3.14]{MR02} that $d=9$ is the lowest odd dimension for which some of the Krawtchouk polynomials $K_p^d(k)$ have an integral zero and that the next odd dimension with an integral zero is $d=17$.
\end{enumerate}
\end{remark}

\section{Negative Results}

We construct examples of orbifolds that are $p$-isospectral to manifolds (for various $p \geq 1$) and obtain some inverse spectral results for the $p$-spectrum alone. We also construct a family of mutually 1-isospectral 2-orbifolds with different geometric and topological properties.

\subsection{Isospectrality of manifolds and orbifolds for various $p$-spectra}\label{sec.isospec}

The article \cite{GR03} contains examples of orbifolds of even dimension $d=2m$ that are $m$-isospectral to manifolds. Here, we will construct examples of flat orbifolds that are $p$-isospectral to manifolds for various values of $p$.

\begin{nota}\label{nota:flat} Every closed flat orbifold or manifold is of the form $\O=\Sigma\bs \R^d$ where $\Sigma$ is a discrete subgroup of the Euclidean motion group $\R^d\rtimes O(d)$.  (Note that $\O$ is a manifold, i.e., $\Sigma$ acts freely on $\R^d$, if and only if $\Sigma$ is a Bieberbach group). The restriction of the projection $\R^d\rtimes O(d)\to O(d)$ to $\Sigma$ has finite image $F$ and kernel a lattice $\Lambda$ of rank $d$.   We will refer to $\Lambda$ as the translation lattice of $\Sigma$. The group $F$ is the holonomy group of $\O$.   For each $\gamma\in F$, there exists $a=a(\gamma)\in \R^d$, unique modulo $\Lambda$, such that $\gamma\circ L_a\in \Sigma$, where $L_a$ denotes translation by $a$.   Let $\Lambda^*$ denote the lattice dual to $\Lambda$.  For $\mu\geq 0$ and $\gamma\in F$, set
$$e_{\mu,\Sigma}(\gamma)=\sum_{v\in \Lambda^*, \|v\|=\mu,\gamma(v)=v}e^{2\pi i v\cdot a}.$$

\end{nota}

In the notation of Notation~\ref{nota:flat}, let $T$ be the torus $\Lambda\bs\R^d$.    Let $\eta =\sum_J \, f_J\,dx^J$ be the pullback to $\R^n$ of a $p$-form on $\O$.  (Here $J$ varies over all multi-indices $1\leq j_1<\dots<j_p\leq d$, and the functions $f_J$ are $\Sigma$-invariant.)  We have
$$\Delta^p(\eta)=\sum_J \,\Delta^0( f_J)\,dx^J.$$
Thus every element of the $p$-spectrum of $\O$ occurs as an eigenvalue in the 0-spectrum of the torus $T$; i.e., it is of the form $4\pi^2\|\mu\|^2$ for some $\mu$ in the dual lattice $\Lambda^*$ of $\Lambda$.   The following result of Miatello and Rossetti, while originally stated in the context of flat manifolds, is also valid for orbifolds.

\begin{prop}[{\cite[Theorem 3.1]{MR01}}]
\label{flatspeccomp}
We use the notation of Notation~\ref{nota:flat}.
\begin{enumerate}
\item For $\mu\geq 0$, the multiplicity $m_{p,\mu}(\Sigma)$ of $\mu$ in the $p$-spectrum of $\O=\Sigma\bs\R^d$ is given by
$$m_{p,\mu}(\Sigma)=\frac{1}{|F|}\sum_{\gamma\in F}\, \tr_p(\gamma) e_{\mu,\Sigma}(\gamma)=\frac{1}{|F|}\binom{d}{p}+\frac{1}{|F|}\sum_{1\neq\gamma\in F}\, \tr_p(\gamma) e_{\mu,\Sigma}(\gamma).$$
\item \label{it:flatspeccomp-isospectral} Thus if $\O'=\Sigma'\bs \R^d$, where $\Sigma'$ has the same translation lattice $\Lambda'=\Lambda$, and if there is a bijection $\gamma\mapsto\gamma'$ between the holonomy groups of $\O$ and $\O'$ such that $$\tr_p(\gamma) e_{\mu,\Sigma}(\gamma)=\tr_p(\gamma') e_{\mu,\Sigma'}(\gamma')$$
for all $\gamma\in F$, then $\O$ and $\O'$ are $p$-isospectral.

\end{enumerate}
\end{prop}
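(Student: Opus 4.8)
The plan is to realise $\O=\Sigma\bs\R^d$ as the quotient of a flat torus by a finite group of isometries and to read off each eigenvalue multiplicity as the dimension of a space of invariants. Set $T=\Lambda\bs\R^d$. Since $\Lambda$ is normal in $\Sigma$ with $\Sigma/\Lambda\cong F$, the holonomy group $F$ acts on $T$ by isometries: the class $\gamma\in F$ is represented by $\gamma\circ L_{a(\gamma)}\in\Sigma$, i.e.\ by the affine map $\phi_\gamma\colon x\mapsto\gamma x+\gamma a(\gamma)$, and replacing $a(\gamma)$ by another representative mod $\Lambda$ does not alter the map induced on $T$. Then $\O=F\bs T$, so the $p$-forms on $\O$ are precisely the $F$-invariant $p$-forms on $T$; as $F$ acts by isometries it commutes with $\Delta^p$, and hence $\Delta^p$ on $\O$ is the restriction of $\Delta^p$ on $T$ to $\Omega^p(T)^F$. (Freeness of the $F$-action is never used, which is why the statement applies to orbifolds as well as to manifolds.) Trivialising the cotangent bundle of $T$ by the parallel coframe $dx^1,\dots,dx^d$ identifies $\Omega^p(T)$ with $C^\infty(T)\otimes\wedge^p(\R^d)^*$, and, $T$ being flat, the Hodge Laplacian acts as $\Delta^0\otimes\Id$. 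Consequently its eigenspace on $T$ for the eigenvalue $4\pi^2\mu^2$ is $W_\mu=V_\mu\otimes\wedge^p(\R^d)^*$, where $V_\mu=\operatorname{span}\{\,e^{2\pi i v\cdot x}:v\in\Lambda^*,\ \|v\|=\mu\,\}$.

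Both tensor factors carry $F$-actions, and, by naturality of pullback under $\phi_\gamma$, the action of $F$ on $\Omega^p(T)$ is the tensor product of these: pullback by $\phi_\gamma$ sends a monomial $e^{2\pi i v\cdot x}\,dx^J$ (with $J$ a $p$-element multi-index) to the product of its transformed exponential and the constant form $\phi_\gamma^*(dx^J)$. Since $\frac{1}{|F|}\sum_{\gamma\in F}\gamma$ is the projection onto the invariants,
\[
m_{p,\mu}(\Sigma)=\dim W_\mu^{F}=\frac{1}{|F|}\sum_{\gamma\in F}\tr\!\big(\gamma\mid V_\mu\big)\,\tr\!\big(\gamma\mid\wedge^p(\R^d)^*\big).
\]
The action on $\wedge^p(\R^d)^*$ factors through the orthogonal part $\gamma$, so its trace is $\tr_p(\gamma)$. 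On $V_\mu$ one computes $\phi_\gamma^*(e^{2\pi i v\cdot x})=e^{2\pi i\,v\cdot\gamma a(\gamma)}\,e^{2\pi i(\gamma^{-1}v)\cdot x}$, which contributes to the trace of $\gamma$ only when $\gamma v=v$, with diagonal entry $e^{2\pi i\,v\cdot\gamma a(\gamma)}=e^{2\pi i\,v\cdot a(\gamma)}$ (using $v\cdot\gamma a(\gamma)=(\gamma^{-1}v)\cdot a(\gamma)=v\cdot a(\gamma)$ on the fixed subspace of $\gamma$); summing over the $v$ with $\|v\|=\mu$ and $\gamma v=v$ gives exactly $e_{\mu,\Sigma}(\gamma)$. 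Hence $m_{p,\mu}(\Sigma)=\frac{1}{|F|}\sum_{\gamma\in F}\tr_p(\gamma)\,e_{\mu,\Sigma}(\gamma)$, and separating off the $\gamma=1$ term, for which $\tr_p(1)=\binom{d}{p}$, yields the second expression in item~(1). For item~(2): a bijection of holonomy groups gives $|F'|=|F|$, and $\Lambda'=\Lambda$ means the $p$-spectra of $\O$ and $\O'$ are supported on the same set of candidate values $4\pi^2\mu^2$; the hypothesis $\tr_p(\gamma)e_{\mu,\Sigma}(\gamma)=\tr_p(\gamma')e_{\mu,\Sigma'}(\gamma')$ then makes the formula just proved give $m_{p,\mu}(\Sigma)=m_{p,\mu}(\Sigma')$ for every $\mu$, so $\O$ and $\O'$ are $p$-isospectral.

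The argument has essentially no analytic content, since $F$ is finite; the care goes into the bookkeeping. The points to get right are: that the $F$-action on $T$ is well defined and isometric, so that the $p$-spectrum of $\O$ is genuinely the $F$-invariant part of the $p$-spectrum of $T$ (this is where one uses that a flat orbifold is a global quotient); that pullback by $\phi_\gamma$ respects the decomposition $\Omega^p(T)=C^\infty(T)\otimes\wedge^p(\R^d)^*$ and acts on the second factor through $\gamma$ alone; and the handling of the translational phase $e^{2\pi i\,v\cdot a(\gamma)}$, in particular checking that terms with $\gamma v\neq v$, or in which $\phi_\gamma^*$ carries $dx^J$ into a combination attached to a different $v$-block, contribute nothing to the trace. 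Once the invariant-dimension formula is in place the proposition follows in a line.
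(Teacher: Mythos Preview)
Your argument is correct and is essentially the standard proof one finds in \cite{MR01}. Note, however, that the paper itself does not prove this proposition: it only gives a short preamble showing that $\Delta^p$ on $\O$ acts diagonally on the coefficient functions $f_J$, so that the $p$-spectrum of $\O$ lies inside the $0$-spectrum of the torus $T=\Lambda\bs\R^d$, and then cites \cite[Theorem~3.1]{MR01} for the multiplicity formula, remarking that the argument goes through for orbifolds. Your proof spells this out by identifying the $p$-eigenspace of $\O$ with the $F$-invariants in $V_\mu\otimes\wedge^p(\R^d)^*$ and computing the character, which is exactly the Miatello--Rossetti approach; so there is nothing to compare beyond noting that you have supplied the details the paper chose to outsource.
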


\begin{cor}\label{isosp condition}
Suppose $\O=\Sigma\bs \R^d$ and $\O'=\Sigma'\bs \R^d$ where $\Sigma$ and $\Sigma'$ have the same translation lattice.   If $|F|=|F'|$ and if $\tr_p(\gamma)=0=\tr_p(\gamma')$ for all $1\neq \gamma\in F$ and $1\neq \gamma'\in F'$, then $\O$ and $\O'$ are $p$-isospectral.

\end{cor}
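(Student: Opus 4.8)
The statement is an immediate consequence of Proposition~\ref{flatspeccomp}, so the proof amounts to unwinding the multiplicity formula and checking that the contribution of the trivial holonomy element is the same for $\Sigma$ and $\Sigma'$. The plan is as follows. First I would apply part~(1) of Proposition~\ref{flatspeccomp} to each of $\O=\Sigma\bs\R^d$ and $\O'=\Sigma'\bs\R^d$, writing, for every $\mu\geq 0$,
\[
m_{p,\mu}(\Sigma)=\frac{1}{|F|}\,\tr_p(1)\,e_{\mu,\Sigma}(1)+\frac{1}{|F|}\sum_{1\neq\gamma\in F}\,\tr_p(\gamma)\,e_{\mu,\Sigma}(\gamma),
\]
and similarly for $\Sigma'$. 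By hypothesis $\tr_p(\gamma)=0$ for every non-identity $\gamma\in F$ and every non-identity $\gamma'\in F'$, so in both formulas the entire sum over non-identity elements drops out, leaving only the $\gamma=1$ term $\frac{1}{|F|}\binom{d}{p}e_{\mu,\Sigma}(1)$ (and its analogue for $\Sigma'$).

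The only point requiring a moment's care is the claim that the surviving term is the same for $\Sigma$ and $\Sigma'$. Here I would use that, in the notation of Notation~\ref{nota:flat}, the translations lying in $\Sigma$ are exactly the lattice $\Lambda$; hence the vector $a=a(1)$ attached to $\gamma=1$ (characterised by $L_a\in\Sigma$) may be taken to be $0$, so that $e^{2\pi i v\cdot a}=1$ for all $v\in\Lambda^*$ and therefore
\[
e_{\mu,\Sigma}(1)=\#\{v\in\Lambda^*:\|v\|=\mu\}.
\]
This quantity depends only on the translation lattice $\Lambda$, not on $\Sigma$. Since $\Sigma'$ has the same translation lattice, $e_{\mu,\Sigma'}(1)=e_{\mu,\Sigma}(1)$; combining this with $|F|=|F'|$ gives $m_{p,\mu}(\Sigma)=m_{p,\mu}(\Sigma')$ for every $\mu\geq 0$. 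As the $p$-spectrum of a flat orbifold $\Sigma\bs\R^d$ consists precisely of the numbers $4\pi^2\mu^2$ with $\mu\geq 0$, each occurring with multiplicity $m_{p,\mu}(\Sigma)$, this equality of multiplicities says exactly that $\O$ and $\O'$ are $p$-isospectral.

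Alternatively, one could route the argument through part~(2) of Proposition~\ref{flatspeccomp}: since $|F|=|F'|$, choose any bijection $\gamma\mapsto\gamma'$ between the holonomy groups with $1\mapsto 1$; for $\gamma\neq 1$ both $\tr_p(\gamma)e_{\mu,\Sigma}(\gamma)$ and $\tr_p(\gamma')e_{\mu,\Sigma'}(\gamma')$ vanish, and for $\gamma=1$ they agree by the computation of $e_{\mu,\Sigma}(1)$ above, so the hypothesis of part~(2) is met. Either way there is no substantive obstacle; the one thing to get right is the bookkeeping for the identity element, which is where the hypothesis that the two translation lattices coincide is used.
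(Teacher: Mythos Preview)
Your proposal is correct and matches the paper's approach: the paper states Corollary~\ref{isosp condition} without proof, treating it as an immediate consequence of Proposition~\ref{flatspeccomp}, and your argument is precisely the unwinding one would expect. Your extra care in noting that $a(1)$ may be taken to be $0$ and hence $e_{\mu,\Sigma}(1)=\#\{v\in\Lambda^*:\|v\|=\mu\}$ depends only on $\Lambda$ is the only substantive point to check, and you handle it correctly.
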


\begin{nota}\label{nota.okmk}
Given $d\geq 2$ and $k\in \{1, \dots, d-1\}$, let $\gamma_k:\R^{d}\to\R^{d}$ be given by
$$\gamma_k(x_1,\dots, x_d)=(-x_1,\dots, -x_k, x_{k+1},\dots, x_d),$$
let $a\in (\frac{1}{2}\Z)^d$ with at least one of the last $d-k$ entries of $a$ equal to $\frac{1}{2}$, and let $\rho_k=\gamma_k\circ L_a$.   Let $\Sigma_k$, respectively $\Sigma'_k$ be the discrete subgroup of the Euclidean motion group $\R^d\rtimes O(d)$ generated by $\Z^d$ together with $\gamma_k$, respectively $\rho_k$.   Set $\O_k:=\Sigma\bs \R^d$ and $M_k:= \Sigma'\bs \R^d$.  
\end{nota}

\begin{prop}\label{it:flat-involution}~We use Notation~\ref{nota.okmk}.  
\begin{enumerate}
\item Suppose that $K_p^d(k)=0$, where $K_p^d$ is the Krawtchouk polynomial given by Equation~\eqref{krawtpoly}.  Then $\O_k$ and $M_k$ are $p$-isospectral.
\item
Moreover, if $k'\in \{1, \dots, d-1\}$ is another zero of $K_p^d$, then the collection $\{M_k, M_{k'}, \O_k, \O_{k'}\}$ are all mutually $p$-isospectral.
\item $M_k$ is a manifold for all $k$, whereas the singular set of $\O_k$ consists precisely of $2^k$ singular strata each of which has codimension $k$.
\item $\O_k$ and $M_k$ are orientable if and only if $k$ is even.

\end{enumerate}
\end{prop}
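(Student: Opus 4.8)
The plan is to verify the four items in order, using Proposition~\ref{flatspeccomp} (via Corollary~\ref{isosp condition}) for the isospectrality claims and direct examination of the group actions for the structural claims. The key observation underlying everything is that $\gamma_k$ has eigenvalue type $E(;k)$ — its $(-1)$-eigenspace is $k$-dimensional and it has no complex eigenvalues — so by Example~\ref{isotropy2}, $\tr_p(\gamma_k) = K^d_p(k)$, and likewise $\tr_p$ of the linear part of $\rho_k$ equals $K^d_p(k)$.

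First I would establish item (3), since it is needed to make sense of the codimension hypotheses elsewhere. The group $\Sigma'_k$ generated by $\Z^d$ and $\rho_k = \gamma_k \circ L_a$: one checks $\Sigma'_k$ acts freely on $\R^d$ because any element $\gamma_k^{\epsilon}\circ L_v$ with $\epsilon$ odd has linear part $\gamma_k$ whose $(+1)$-eigenspace in the last $d-k$ coordinates is shifted by the half-integer translation forced by the hypothesis on $a$ (at least one of the last $d-k$ entries of $a$ is $\tfrac12$), so it has no fixed point; elements with $\epsilon$ even are nontrivial translations. Hence $M_k$ is a manifold (Bieberbach). For $\O_k = \Sigma_k\bs\R^d$, the fixed-point set of $\gamma_k$ is the coordinate subspace $\{x_1 = \dots = x_k = 0\}$, and more generally the fixed sets of the conjugates $\gamma_k\circ L_v$, $v\in\Z^k\times\{0\}^{d-k}$ — there are $2^k$ of these modulo $\Z^d$, namely the subspaces where each of $x_1,\dots,x_k$ is in $\tfrac12\Z$ — descend to $2^k$ singular strata in the quotient, each of dimension $d-k$, hence codimension $k$. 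I would spell out that these are exactly the singular strata (no higher-order isotropy occurs since $F = \langle\gamma_k\rangle \cong \Z/2$).

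Next, items (1) and (2): both $\Sigma_k$ and $\Sigma'_k$ have translation lattice $\Z^d$ and holonomy group of order $2$, with the nontrivial holonomy element being $\gamma_k$ in both cases. If $K^d_p(k)=0$, then $\tr_p(\gamma_k)=0$, so Corollary~\ref{isosp condition} applies directly (taking $F=F'=\langle\gamma_k\rangle$, with $\tr_p$ of the nontrivial element equal to $0$), giving that $\O_k$ and $M_k$ are $p$-isospectral. For item (2), if $k'$ is also a zero of $K^d_p$, the same corollary — applied to each of the six pairs drawn from $\{M_k, M_{k'}, \O_k, \O_{k'}\}$, all of which share the lattice $\Z^d$ and have $\tr_p$ vanishing on all nontrivial holonomy elements — yields mutual $p$-isospectrality; this requires noting that $\gamma_k$ and $\gamma_{k'}$ both lie in $O(d)$ stabilizing $\Z^d$, which is immediate.

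Finally item (4): an element of $O(d)$ is orientation-preserving iff its determinant is $+1$; here $\det(\gamma_k) = (-1)^k$, and the linear part of $\rho_k$ is also $\gamma_k$, so the holonomy group of either orbifold lies in $SO(d)$ iff $k$ is even. Since the translation lattice is trivial to orient and orientability of a flat orbifold is equivalent to the holonomy representation landing in $SO(d)$ (the local charts are pieces of $\R^d$ with its standard orientation, preserved exactly when the deck transformations are), $\O_k$ and $M_k$ are orientable iff $k$ is even. The main obstacle I anticipate is item (3): being careful that the half-integer condition on the last $d-k$ coordinates of $a$ genuinely forces freeness of $\Sigma'_k$ (one must rule out fixed points of $\gamma_k^{\epsilon}\circ L_v$ for all odd $\epsilon$ and all $v\in\Z^d$, not just $\epsilon=1$, $v=0$), and that the count of $2^k$ singular strata for $\O_k$ is correct — i.e., that distinct translates by $\Z^k\times\{0\}^{d-k}$ modulo $2\Z^k\times\{0\}^{d-k}$ give genuinely distinct (disconnected) strata in the quotient rather than a single connected one. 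The rest is routine application of the cited results.
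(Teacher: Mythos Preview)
Your proposal is correct and follows essentially the same approach as the paper: both use Corollary~\ref{isosp condition} together with the identity $\tr_p(\gamma_k)=K^d_p(k)$ for items (1) and (2), examine the fixed points of the induced involutions for item (3), and read off orientability from $\det(\gamma_k)=(-1)^k$ for item (4). The only stylistic difference is that for item (3) the paper passes to the torus $T=\Z^d\backslash\R^d$ as a double cover of each space and observes that $\rho_k$ (respectively $\gamma_k$) acts fixed-point freely (respectively with $2^k$ fixed components of codimension $k$), which slightly streamlines the count you flagged as the main obstacle; your direct analysis in $\R^d$ arrives at the same conclusion.
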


\begin{proof}
(1 and 2) In the notation of Notation~\ref{nota:flat}, the holonomy groups of both $\O_k$ and $M_k$ have order 2 with generator $\gamma_k$.   Since $\tr_p(\gamma_k)=K^d_p(k)=0$ (as seen in Example~\ref{isotropy2}), Corollary~\ref{isosp condition} implies that $\O_k$ and $M_k$ are $p$-isospectral .  Moreover, the corollary also implies that their $p$-spectra are independent of the choice of the zero $k$ of $K^d_p$.

(3) The torus $T=\Z^d\bs  \R^d$ is a two-fold cover  of each of $\O_k$ and $M_k$.  The map $\rho_k$ induces a fixed-point free involution of $T$, so $M_k$ is a manifold.    In contrast, the involution of $T$ induced by $\gamma_k$ fixes all points each of whose first $k$ coordinates lie in $\{0, \frac{1}{2}\}$. Thus $\O_k$ contains $2^k$ singular strata each of codimension $k$. 

(4) is immediate.

\end{proof}
The examples below follow from Proposition~\ref{it:flat-involution}.
\begin{ex}\label{cor.ornon} In the notation of Notation~\ref{nota:flat}, 
in every even dimension $d$, there exists a Bieberbach manifold $M$ and a flat $d$-dimensional orbifold with singular set of codimension $\frac{d}{2}$ such that $M$ and $\O$ are $p$-isospectral for \emph{all odd} $p$.  (See Remark~\ref{rm:zeros_of_K} part 5.)

\end{ex}

\begin{ex} By Equation~\eqref{krawtpoly}, one easily sees that if 
$k$ is a zero of $K^d_p$, then so is $d-k$.  Moreover, observe that $\O_k$ and $M_k$ are orientable if and only if $k$ is even.  In particular, if $d$ is odd and $K_p^d(k)=0$, then the collection of $p$-isospectral orbifolds and manifolds $\{M_k,\O_k,M_{d-k}, \O_{d-k}\}$ contains an orientable and a non-orientable manifold and an orientable and a non-orientable orbifold.  Moreover, the codimensions of the singular sets of the two orbifolds have different parity.   

For a specific example, we can take $d=n^2$ with $n\geq 3$, $p=2$ and $k=\frac{n(n+1)}{2}$ as in Remark~\ref{rm:zeros_of_K}, part 3.
This example 
was motivated by Example 4.3 in Miatello-Rossetti \cite{MR01}, where the case $n=3$ was considered in the manifold case.  We will use this example to prove sharpness in a result in the next section.
\end{ex}

\subsection{Negative inverse spectral results for the middle degree}\label{sec.dim1exa}

For manifolds and orbifolds of even dimension $d$, the $\frac{d}{2}$-spectrum contains less information than the other Hodge spectra due to Hodge duality: in any degree $p$, the Hodge Laplacian leaves invariant the subspace of  exact forms and the subspace of co-exact forms.   In the middle degree $p=\frac{d}{2}$, the spectra on these two subspaces are identical.  See \cite{GR03} for many examples illustrating the weakness of the middle degree spectrum.  

\begin{ex}\label{ex:d/2}Proposition~\ref{it:flat-involution} together with Remark~\ref{rm:zeros_of_K} part 4 yield $\frac{d}{2}$-isospectral flat manifolds and orbifolds that are $p$-isospectral for all odd $p$.   \cite[Theorem 3.2]{GR03} proves the isospectrality of these manifolds and orbifolds by different methods.  In the special case that $d=2$, the resulting manifold is a Klein bottle, while the orbifold is a cylinder, and these were also shown to be 1-isospectral to a M\"obius strip.  In \cite{GR03}, it was also asserted that these surfaces are $1$-isospectral to a flat $3$-pillow (i.e., a flat Riemannian orbifold whose underlying space is a sphere with three cone points); however as shown in the subsequent errata \cite{GR21}, that assertion was incorrect.
\end{ex}

The example below describes a similar family of mutually $1$-isospectral $2$-dimensional orbifolds, demonstrating further the weakness of the middle-degree spectrum as a topological invariant. The $1$-isospectrality of the first and the fifth orbifolds is particularly striking as the first can be viewed as a square with absolute boundary conditions, while the second is topologically a sphere with three cone points. We also see that the $1$-spectrum does not detect the maximum order of isotropy present in an orbifold: the second and third orbifolds have maximum isotropy of order two in contrast to maximum order four in the other three. 

\begin{ex}\label{ex:isospecorb}

We construct a family of five mutually 1-isospectral 2-dimensional flat orbifolds.  We use  Notation~\ref{nota:flat}.   All the orbifolds will have the same translation lattice $\Lambda=\Z^2$, and their holonomy groups will have order four.  To define each orbifold, we will specify both its holonomy group $F < O(2)$ and a choice of $a(\gamma)$ for each $\gamma\in F$.   The orbifold is then given by $\O:=\Sigma\bs\R^2$ where $\Sigma < \R^2\rtimes O(2)$ is generated by $\Lambda \cup \{\gamma\circ L_{a(\gamma)}:\gamma \in F\}$. In each example, $F$ contains $\Id$, $-\Id$, and two elements with 1-trace zero. One can easily verify the mutual isospectrality of the five orbifolds using Proposition~\ref{flatspeccomp}, part~\ref{it:flatspeccomp-isospectral}. The symbolic expression at the end of each example gives the Conway notation for the orbifold diffeomorphism class to which $\O$ belongs.

\begin{enumerate}
\item\label{exa:flatsquare} Let $F<O(2)$ be the Klein 4-group generated by the reflections $\gamma_1$ and $\gamma_2$ across the $x$ and $y$-axes, respectively. Take $a(\gamma_1)$ and $a(\gamma_2)$ both trivial. The underlying space of the corresponding orbifold $\O$ is a square, and the $1$-spectrum of $\O$ as an orbifold coincides with the $1$-spectrum of the square with absolute boundary conditions.  As an orbifold, the four edges are reflectors and the four corners are order $2$ dihedral points. Thus $\O$ is of class $*2222$. 
\item Let $F<O(2)$ be as in item~\ref{exa:flatsquare} now with $a(\gamma_1)= (\tfrac{1}{2},0)$ and $a(\gamma_2)$ trivial. The underlying space of $\O$ is a disk, and the singular set consists of two cone points of order $2$ and a mirror edge along the boundary of the disk. Thus $\O$ is of class $22*$.
\item Let $F<O(2)$ be as in item~\ref{exa:flatsquare} now with $a(\gamma_1)= (\tfrac{1}{2},0)$ and $a(\gamma_2) = (0,\tfrac{1}{2})$. The underlying space of $\O$ is a projective plane, and the singular set consists of two cone points of order $2$. Thus $\O$ is of class $22\times$.
\item Let $F<O(2)$ be the Klein 4-group generated by the reflections $\gamma_3$ and $\gamma_4$ across the lines $y=x$ and $y=-x$, respectively. Take $a(\gamma_3)$ and $a(\gamma_4)$ both trivial. The underlying space of $\O$ is a disk, and the singular set consists of two order $2$ dihedral points on the boundary of the disk connected by two reflector edges, along with an interior order $2$ cone point. Thus $\O$ is of class $2*22$. 
\item  Let $F<O(2)$ be the cyclic group of order $4$ generated by rotation $\gamma$ through angle $\frac{\pi}{2}$ about the origin, each element acting without precomposition by a translation. The underlying space of $\O$ is a sphere, and the singular set consists of two cone points of order $4$ and one cone point of order $2$. Thus $\O$ is of class $244$.
\end{enumerate}
\end{ex}

\begin{remark}\label{not 0-isosp} The five orbifolds in Example~\ref{ex:isospecorb} are all mutually distinguishable by their 0-spectra.  This follows, for example, by comparing the heat invariants for these orbifolds using \cite[Table 1]{DGGW08} and the fact that the lengths of the mirror loci in the first, second and fourth orbifolds are mutually distinct.

\end{remark}

\section{Positive Results}

In this section we obtain positive results regarding our central question of isospectrality between Riemannian manifolds and orbifolds. We first consider the $p$-spectrum for $p \geq 0$ and then focus our attention on the case where $p=1$.

\subsection{Positive inverse spectral results for the $p$-spectra}\label{sec: common cover}

We continue to assume that $\O$ is a $d$-dimensional closed Riemannian orbifold.

\begin{nota}\label{parity}
We will say that a singular stratum of $\O$ has \emph{positive, respectively negative, parity} if its codimension in $\O$ is even, respectively odd.  For $\epsilon\in\{\pm\}$, let $k_\epsilon$ be the minimum codimension of the primary singular strata (if any) of $\O$ of parity $\epsilon$, and let $S_\epsilon(\O)$ be the collection of all primary strata of codimension $k_\epsilon$.  Set
$$B_\epsilon^p(\O) =\sum_{N\in S_\epsilon(\O)}\,\frac{1}{|\Iso(N)|}\,b_0^p(N).$$
If there are no singular strata of parity $\epsilon$, then we set $B_\epsilon^p(\O)=0.$
\end{nota}

\begin{remark}\label{rem:p=0}
In case $p=0$, one has $b_0^0(N) > 0$ for every primary singular stratum.   Thus  the condition $B_-^0(\O)\neq 0$, respectively $B_+^0(\O)\neq 0$, is equivalent to the condition that $\O$ contains at least one primary singular stratum of odd, respectively even, codimension. In the following proposition we obtain weak analogues of results for functions from \cite{GR03} and its errata \cite{GR21} and \cite[Theorem 1.2]{S10}. 

\begin{prop}\label{prop: odd par}~
\begin{enumerate}
\item \label{it:oddpar-1} $B_-^p(\O)$ is an invariant of the $p$-spectrum.   In particular, if $B_-^p(\O)\neq 0$, then $\O$ cannot be $p$-isospectral to a Riemannian manifold.
\item \label{it:oddpar-2}  If $B_+^p(\O)\neq 0$, then $\O$ cannot be $p$-isospectral to any closed Riemannian manifold
$M$ such that $M$ and $\O$ have isometric (possibly infinite) homogeneous Riemannian covers.  If $B_+^p(\O)\neq 0$ and $\O$ is good, then $\O$ cannot be $p$-isospectral to any closed Riemannian manifold $M$ such that $M$ and $\O$ have finite $p$-isospectral Riemannian covers.
\end{enumerate}
\end{prop}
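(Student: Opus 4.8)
The plan is to extract $B^p_\pm(\O)$ from the lowest-order singular contributions to the small-time heat expansion \eqref{heatascoeffc} of $\O$, using that a primary singular stratum of codimension $k$ affects the coefficient $c^p_l(\O)$ only when $l\ge k$ and $l\equiv k\pmod 2$.

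First I would make this precise from \thmref{thm.asympt}. With $k=\codim(N)$ we have $\dim(N)=d-k$, so the stratum term $I^p_N(t)/|\Iso(N)|$ equals $(4\pi t)^{-d/2}(4\pi t)^{k/2}|\Iso(N)|^{-1}\sum_{i\ge0}b^p_i(N)\,t^i$; hence in the form \eqref{heatascoeffc} the stratum $N$ contributes $(4\pi)^{k/2}|\Iso(N)|^{-1}b^p_{(l-k)/2}(N)$ to $c^p_l(\O)$ when $l\ge k$ and $l\equiv k\pmod2$, and nothing otherwise, while the regular term $I^p_0(t)$ of \eqref{Ip0t} contributes $a^p_{l/2}(\O)$ when $l$ is even and nothing when $l$ is odd. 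Therefore, for $l$ odd, $c^p_l(\O)$ is a sum over primary strata of odd codimension $\le l$; with $k_-$ and $S_-(\O)$ as in Notation~\ref{parity}, this yields $c^p_l(\O)=0$ for every odd $l<k_-$ and, using \eqref{bzero},
\[
c^p_{k_-}(\O)=(4\pi)^{k_-/2}\sum_{N\in S_-(\O)}\frac{b^p_0(N)}{|\Iso(N)|}=(4\pi)^{k_-/2}\,B^p_-(\O)
\]
(and every odd-indexed $c^p_l(\O)$ vanishes, in agreement with $B^p_-(\O)=0$, when $\O$ has no odd-codimension primary strata). Since the $p$-spectrum determines the heat trace and hence all the $c^p_l(\O)$, this exhibits $B^p_-(\O)$ as a $p$-spectral invariant. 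A closed Riemannian manifold has no odd-indexed coefficients by \eqref{manifoldasymp}, so if $\O$ were $p$-isospectral to a manifold then $c^p_{k_-}(\O)=0$, forcing $B^p_-(\O)=0$; this proves part~\ref{it:oddpar-1}.

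For part~\ref{it:oddpar-2}, suppose $\O$ is $p$-isospectral to a closed manifold $M$, and let $k_+$ be as in Notation~\ref{parity}. The same bookkeeping on even indices gives $a^p_{l/2}(\O)=a^p_{l/2}(M)$ for even $l<k_+$ and, at $l=k_+$,
\[
(4\pi)^{k_+/2}\,B^p_+(\O)=a^p_{k_+/2}(M)-a^p_{k_+/2}(\O).
\]
Assume first that $\O$ and $M$ have isometric (possibly infinite) homogeneous Riemannian covers. Each $a^p_i$ is the integral of a universal polynomial in the curvature and its covariant derivatives; on a homogeneous space such a polynomial is a constant function, and this constant is the same, say $c_i$, for $\O$ and for $M$ because they share the local geometry of the common homogeneous cover. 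Hence $a^p_i(\O)=c_i\vol(\O)$ and $a^p_i(M)=c_i\vol(M)$ for all $i$. Since $p$-isospectral orbifolds have equal volume (compare leading coefficients: $c^p_0(\cdot)=\binom{d}{p}\,\vol(\cdot)$), we conclude $a^p_i(\O)=a^p_i(M)$ for all $i$, so $B^p_+(\O)=0$, a contradiction. For the remaining, ``good'' case, I would follow the covering arguments of \cite{GR03} (as corrected in \cite{GR21}) and \cite[Theorem~1.2]{S10}: heat invariants scale by the covering degree, $a^p_i(\O^\flat)=\deg(\O^\flat\to\O)\,a^p_i(\O)$ and likewise for $M$, and goodness of $\O$ supplies a (possibly infinite) manifold cover of $\O$; combining these scaling identities with the two $p$-isospectralities (of $\O$ with $M$ and of the finite covers $\O^\flat$ with $M^\flat$) should again force $a^p_{k_+/2}(\O)=a^p_{k_+/2}(M)$, contradicting $B^p_+(\O)\ne0$.

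\textbf{Main obstacle.} I expect the one genuinely delicate step to be this last reduction in the ``good'' case: getting from \emph{merely $p$-isospectral} finite covers (rather than a common cover) to the equality of the level-$k_+/2$ heat invariants of $\O$ and $M$. That is exactly the point where the covering machinery of \cite{GR03,GR21,S10} is needed; everything else---part~\ref{it:oddpar-1} and the homogeneous-cover half of part~\ref{it:oddpar-2}---is direct bookkeeping with the heat-trace formula of \thmref{thm.asympt}.
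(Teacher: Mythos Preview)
Your approach is correct and essentially the same as the paper's.  Your treatment of part~\ref{it:oddpar-1} and of the homogeneous-cover half of part~\ref{it:oddpar-2} matches the paper's argument line for line (with more explicit bookkeeping on your side about which strata contribute to which $c^p_l$, which is welcome).

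Where you diverge is in the finite-cover half of part~\ref{it:oddpar-2}, and there you are \emph{overestimating} the difficulty.  The paper does not invoke any ``covering machinery'' from \cite{GR03,GR21,S10} beyond the two elementary facts you already wrote down.  The argument is simply: the $p$-spectrum determines the volume (via $c^p_0(\cdot)=\binom{d}{p}\vol(\cdot)$), so the $p$-isospectralities of the pairs $(\O,M)$ and $(M^*,M^{**})$ force $\vol(\O)=\vol(M)$ and $\vol(M^*)=\vol(M^{**})$; hence the two covering degrees coincide, say both equal to $r$.  Then your scaling identity gives, for every $j$,
\[
a_j^p(\O)=\tfrac{1}{r}\,a_j^p(M^*)=\tfrac{1}{r}\,a_j^p(M^{**})=a_j^p(M),
\]
the middle equality being just ``$p$-isospectral spaces have the same heat invariants'' applied to $M^*$ and the manifold $M^{**}$.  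That is the whole argument.  The (possibly infinite) manifold cover that goodness abstractly supplies is never used; your instinct to bring it in is a red herring.  (The one point to keep an eye on is that the middle equality reads $a_j^p(M^*)$ as the full even-indexed heat invariant of $M^*$, which is clean when $M^*$ is itself a manifold; this is the intended setting, and the goodness hypothesis is there to make that reading nonvacuous rather than to feed into the argument.)
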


\begin{proof}~
To prove part~\ref{it:oddpar-1} observe that because $k_-$ is the minimal codimension of the odd primary singular strata, we have that $c_{k_-}^p(\O)=(4\pi)^{k_-/2}B_-^p(\O)$ in the notation of Equation~\eqref{heatascoeffc}, and thus $B_-^p(\O)$ is a spectral invariant.  The second statement of part~\ref{it:oddpar-1} is immediate.

The proof of part~\ref{it:oddpar-2} is similar to the proofs of the analogous statements for the 0-spectrum cited in the introduction and above.  We give a summary.  It suffices to show that if $M$ and $\O$ are isospectral, then each of the two conditions on $M$ and $\O$ implies that $a_j^p(\O)=a_j^p(M)$ for all $j$.   (See equation \eqref{Ip0t}.)   Writing $k_+=2\ell$, one then notes that $c^p_{k_+}(\O) = a_\ell^p(\O)+(4\pi)^\ell B_+^p(\O)$ whereas the corresponding coefficient for $M$ is $a_\ell^p(M)$, contradicting the $p$-isospectrality of $\O$ and $M$.

First consider the case that there exist finite Riemannian covers $M^*$ of $\O$ and $M^{**}$ of $M$ that are $p$-isospectral.  Suppose that $M$ and $\O$ are $p$-isospectral.  Since the $p$-spectrum determines the volume, we have $\vol(M)=\vol(\O)$ and also $\vol(M^{**})=\vol(M^*)$.   Thus the two coverings are of the same order $r$.    Using equation \eqref{Ip0t}, we then have
$$a_j^p(\O) =\frac{1}{r}a_j^p(M^*)=\frac{1}{r}a_j^p(M^{**})= a_j^p(M)$$
for all $j$.

Next suppose that $\O$ and $M$ have a common homogeneous Riemannian cover $\tilde{M}$.  We use the fact that the invariants $a_j^p(M)$ and $a_j^p(\O)$ are integrals over $M$ and $\O$, respectively, of functions $U_j^p(M;\cdot)$ and $U_j^p(\O,\cdot)$ that satisfy locality and universality. (See \cite[Notation 3.13]{GGKM-SSS}.) Homogeneity of the cover, along with the locality and universality properties, implies that $U_j^p(M;\cdot)$ and $U_j^p(\O,\cdot)$ are constant functions with the same constant value $U_j$.  If $M$ and $\O$ are isospectral, then again they have the same volume $V$, so $a_j^p(M)=U_jV=a_j^p(\O)$.
\qedhere

\end{proof}

\end{remark}

\begin{thm}\label{thm:krawt4} Denote by ${\mathcal Orb}^d_k$ the class of all closed $d$-dimensional Riemannian orbifolds with singular set of codimension $k$.    Assume that $k$ is odd.  Let $p\in \{0,1,2,\dots , d\}$ and let $K_p^d$ be the Krawtchouk polynomial given by Equation~\eqref{krawtpoly}. If $K_p^d(k)\neq 0$, then the $p$-spectrum determines the $(d-k)$-dimensional volume of the singular set of elements of ${\mathcal Orb}^d_k$.   In particular, the $p$-spectrum distinguishes elements of orbifolds in ${\mathcal Orb}^d_k$ from Riemannian manifolds.
\end{thm}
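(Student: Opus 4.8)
The plan is to isolate the lowest-order contribution of the singular set to the heat trace and show it is nonzero precisely because $K_p^d(k) \neq 0$. Since every orbifold $\O \in {\mathcal Orb}^d_k$ has singular set of codimension exactly $k$, with $k$ odd, the primary singular strata of $\O$ all have the \emph{same} codimension $k$ (any stratum of smaller codimension would be singular of codimension $<k$, contradicting the hypothesis; note also that the full singular set has codimension $k$ means some stratum realizes it, and all realize at most it). So in the notation of Notation~\ref{parity}, $k_- = k$, $S_-(\O)$ is the set of \emph{all} primary singular strata of $\O$, and there are no strata of even codimension at all, so $B_+^p(\O) = 0$. By Proposition~\ref{prop: odd par}\eqref{it:oddpar-1}, the quantity $B_-^p(\O) = \sum_{N \in PS(\O)} \frac{1}{|\iso(N)|} b_0^p(N)$ is an invariant of the $p$-spectrum, recoverable as $(4\pi)^{-k/2} c_k^p(\O)$.

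The heart of the argument is then to show that $B_-^p(\O)$ is, up to a universal positive constant depending only on $d, p, k$, equal to the $(d-k)$-dimensional volume of the singular set. First I would handle the model case, Example~\ref{isotropy2}: when a stratum $N$ has isotropy group of order $2$ with generator of eigenvalue type $E(;k)$, we have $b_0^p(N) = 2^{-k}\vol(N) K_p^d(k)$. The general case requires knowing that for \emph{every} primary stratum $N$ of codimension $k$ and \emph{every} $\gamma \in \isomax(N)$, the coefficient $b_0^p(\gamma, x)$ integrates to (a constant times) $\vol(N)$ with a sign/magnitude controlled so that the total sum cannot cancel. The key observation is that $\gamma \in \isomax(N)$ means $\dim E_1(\gamma) = \dim N = d-k$, so $A_\gamma$ acts on the $k$-dimensional normal space with no eigenvalue $1$; by Equation~\eqref{bzero}, $b_0^p(N) = \sum_{\gamma \in \isomax(N)} \frac{\tr_p(\gamma)}{|\det(\Id_k - A_\gamma)|}$. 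When $k$ is odd, $-1$ is an eigenvalue of $A_\gamma$ (the determinant of $A_\gamma$, a real $k \times k$ orthogonal matrix, together with the rotation blocks forces... actually one needs: every element $\gamma$ with $\dim E_1(\gamma)$ of a fixed parity contributes with a definite sign). I would argue that each term $\frac{\tr_p(\gamma)}{|\det(\Id_k - A_\gamma)|}$ has the \emph{same sign} as $K_p^d(k)$ — or at least that the aggregate $B_-^p(\O)$ has the sign of $K_p^d(k)$ and magnitude bounded below by a positive multiple of $\vol(\text{singular set})$ — so that $B_-^p(\O) \neq 0$ and in fact determines $\vol$.

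The cleanest route, which I would try first, avoids the sign subtlety by \emph{reducing to the order-two case}: any primary stratum $N$ with $\gamma \in \isomax(N)$ of eigenvalue type $E(\theta_1,\dots,\theta_s; r)$ has the full singular-set volume contribution expressible through $\csc^2(\theta_j/2)$ factors (as in Proposition~\ref{b01general} for $p=1$), all of which are positive, times $\tr_p(\gamma)$. I would then need the arithmetic fact that $\tr_p(\gamma)$ for such $\gamma$ is a sum of terms each of which is a product over the blocks, and that this does not vanish when $K_p^d(k) \neq 0$ — ideally by showing $\tr_p(\gamma)$ and $K_p^d(k)$ share a sign, or by a direct perturbation/continuity argument in the angles $\theta_j$ reducing to $\theta_j \to \pi$ where the eigenvalue type degenerates to $E(;k)$ and $\tr_p(\gamma) \to K_p^d(k)$. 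The main obstacle will be precisely this: controlling the signs of $\tr_p(\gamma)$ across all possible eigenvalue types of elements of $\isomax(N)$ with a $k$-dimensional $(-1)\oplus(\text{rotation})$ part, so that no cancellation occurs in $B_-^p(\O)$. Once $B_-^p(\O) = c_{d,p,k}\cdot \vol(\text{sing})$ with $c_{d,p,k} \neq 0$ is established, the conclusion is immediate: $\vol$ of the singular set is $B_-^p(\O)/c_{d,p,k}$, a spectral invariant; a manifold has $B_-^p = 0$, so no orbifold in ${\mathcal Orb}^d_k$ is $p$-isospectral to a closed Riemannian manifold.
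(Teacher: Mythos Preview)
Your overall architecture is right: isolate $B_-^p(\O)$ as a spectral invariant via Proposition~\ref{prop: odd par}\eqref{it:oddpar-1}, and then compute $b_0^p(N)$ for each primary stratum $N$ of codimension $k_- = k$. But you are missing the one idea that makes the computation trivial, and instead you drift into a sign-control program that is both unnecessary and probably unworkable.

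The point you miss is this. Fix a primary stratum $N$ of codimension $k$. Since $k$ is the \emph{minimum} codimension of any singular stratum, every nontrivial $\gamma\in\Iso(N)$ has $1$-eigenspace of dimension exactly $d-k$ (otherwise a larger fixed-point set would yield a stratum of smaller codimension). Hence $\Iso(N)\cong\Gamma\times\{\Id_{d-k}\}$ with $\Gamma<O(k)$ acting \emph{freely} on the unit normal sphere $S^{k-1}$. Now $k$ is odd, so $S^{k-1}$ is even-dimensional, and the only finite group acting freely on an even-dimensional sphere is $\Z/2\Z$ (Lefschetz). Therefore $|\Iso(N)|=2$ for \emph{every} stratum $N$ of codimension $k$. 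There is no ``general case'' with rotation blocks to worry about: the unique nontrivial element of $\Iso(N)$ automatically has eigenvalue type $E(;k)$, so Example~\ref{isotropy2} applies verbatim and
\[
B_-^p(\O)=\sum_{N\in S_-(\O)}\frac{1}{2}\cdot\frac{\vol(N)}{2^{k}}\,K_p^d(k)=\frac{K_p^d(k)}{2^{k+1}}\sum_{N\in S_-(\O)}\vol(N).
\]
Since $K_p^d(k)\neq 0$, the volume of the singular set is recovered from the spectral invariant $B_-^p(\O)$.

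By contrast, your proposed route---bounding signs of $\tr_p(\gamma)$ across all eigenvalue types $E(\theta_1,\dots,\theta_s;r)$ with $2s+r=k$, or deforming $\theta_j\to\pi$---would run into real trouble: $\tr_p(\gamma)$ genuinely changes sign as the $\theta_j$ vary, so cancellation in $\sum_\gamma \tr_p(\gamma)/|\det(\Id-A_\gamma)|$ cannot be ruled out by such soft arguments. The oddness of $k$ is used not to force a sign on $\tr_p(\gamma)$ but to force $|\Iso(N)|=2$, which sidesteps the issue entirely. (A minor aside: your claim that ``there are no strata of even codimension at all'' is incorrect---higher-codimension strata of either parity may exist---but it is irrelevant, since only the codimension-$k$ strata contribute to $B_-^p$ and to the $(d-k)$-dimensional volume.)
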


\begin{remark}\label{rm:must-be-odd}
The condition requiring that the parity of the codimension of the singular set is odd in Theorem \ref{thm:krawt4} cannot be removed. In \cite{RSW08}, Example 3.3, a pair of flat three-dimensional orbifolds were described, $\O_1$ and $\O_2$, which are $0$-isospectral but not $1$-isospectral. In this example, the singular sets of both $\O_1$ and $\O_2$ are of codimension 2. Further, as noted in the above paper, the singular set of $\O_2$ has volume equal to 12. On the other hand, one can check that the volume of the singular set of $\O_1$ is 6.
\end{remark}

\begin{proof}[Proof of Theorem \ref{thm:krawt4}] Let $N \subset \orb \in {\mathcal Orb}^d_k$ be a singular stratum of codimension $k$.  View $\Iso(N)$ as a subgroup of the orthogonal group $O(d)$.  All non-trivial elements of $\Iso(N)$ must lie in $\isomax(N)$ and must have the same 1-eigenspace; otherwise there would exist a stratum of smaller codimension.  Thus  $\Iso(N)$ must be of the form
$\Iso(N)=\Gamma\times \{\Id_{d-k}\}$ where $\Gamma$ is a subgroup of the orthogonal group $O(k)$ that acts freely on the sphere $S^{k-1}$.   Since $k$ is odd and the only finite group acting freely on even-dimensional spheres has order 2, $\Gamma$ and thus also $\Iso(N)$ must have order 2.

Using Equation~\eqref{Krawt} and Notation~\ref{parity} we have,
\begin{equation*} B_-^p(\orb)=\frac{K_p^d(k)}{2^{k+1}}\sum_{N \in S_-(\orb)}\,\vol(N).
\end{equation*}
As shown in Proposition~\ref{prop: odd par} part~\ref{it:oddpar-1}, $B_-^p(\O)$ is a spectral invariant, thus the proof is complete.
\end{proof}

Thus for example, Theorem~\ref{thm:krawt4} along with Remark~\ref{rm:zeros_of_K} imply the following.

\begin{cor}~
\begin{enumerate}
    \item Assume $d$ is not a perfect square. Then no closed Riemannian $d$-orbifold with singular set of odd codimension can be 2-isospectral to a Riemannian manifold (see Remark \ref{rm:zeros_of_K}, part 3). The same statement holds if $d$ is of the form $d=4m^2$ for some integer $m$. 
    \item Assume that $d < 9$ is odd. Then no closed Riemannian $d$-orbifold with singular set of odd codimension can be $p$-isospectral to a Riemannian manifold for $p \in \{0, 1, 2, \dots, d-1\}$ (see Remark \ref{rm:zeros_of_K}, part 6).
\end{enumerate}
\end{cor}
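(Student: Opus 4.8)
The plan is to read off both statements from Theorem~\ref{thm:krawt4}. That theorem guarantees, for odd $k$ with $K_p^d(k)\neq 0$, that the $p$-spectrum distinguishes every orbifold in ${\mathcal Orb}^d_k$ from every closed Riemannian manifold. Since any closed $d$-orbifold whose singular set has odd codimension $k$ lies in ${\mathcal Orb}^d_k$, and since a singular stratum of a $d$-orbifold has codimension between $1$ and $d$, it suffices in each case to verify that \emph{no} odd integer $k\in\{1,\dots,d\}$ is a zero of the relevant Krawtchouk polynomial $K_p^d$. This turns the corollary into a purely combinatorial check about zeros of Krawtchouk polynomials, for which I would use the observations collected in Remark~\ref{rm:zeros_of_K}; all of the analytic work is already contained in Theorem~\ref{thm:krawt4} (in particular the fact, established there, that for odd $k$ the strata of codimension $k$ have isotropy of order $2$, so $B_-^p(\O)$ is a nonzero multiple of $K_p^d(k)$ times the total volume of those strata).

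For part~(1) I would set $p=2$ and apply Remark~\ref{rm:zeros_of_K}(3): $K^d_2$ has an integral zero only when $d=n^2$ is a perfect square, and then the only zeros are $k=\frac{n(n\pm1)}{2}$. Hence if $d$ is not a perfect square, $K^d_2$ has no integral zero at all, so $K^d_2(k)\neq 0$ for every odd codimension $k$, and Theorem~\ref{thm:krawt4} applies for each such $k$, giving the conclusion. If instead $d=4m^2$, then $n=2m$ and the two candidate zeros are $\frac{n(n\pm1)}{2}=m(2m\pm1)$; the parenthetical observation in Remark~\ref{rm:zeros_of_K}(3) (if $4\mid n$, both zeros $\frac{n(n\pm1)}{2}$ are even) shows $K^d_2$ has no \emph{odd} integral zero, so Theorem~\ref{thm:krawt4} again applies for every odd codimension and yields the claim.

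For part~(2), $d$ odd with $d<9$ forces $d\in\{1,3,5,7\}$, and Remark~\ref{rm:zeros_of_K}(6) records that $d=9$ is the lowest odd dimension for which any $K^d_p$ has an integral zero; thus for these $d$ no Krawtchouk polynomial $K^d_p$ has an integral zero whatsoever. In particular $K^d_p(k)\neq 0$ for every $p\in\{0,\dots,d-1\}$ and every odd $k\in\{1,\dots,d\}$, so Theorem~\ref{thm:krawt4} applies uniformly over all such $p$ and $k$ and gives the stated conclusion. There is no real obstacle here: the only point requiring any care is the parity bookkeeping for the zeros $\frac{n(n\pm1)}{2}$ of $K^d_2$ in part~(1) — which is precisely what dictates the hypotheses on $d$ there and is already packaged in Remark~\ref{rm:zeros_of_K}(3) — after which both items follow formally from Theorem~\ref{thm:krawt4}.
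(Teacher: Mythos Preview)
Your overall approach is exactly the paper's: the corollary is stated as an immediate consequence of Theorem~\ref{thm:krawt4} together with the catalogue of Krawtchouk zeros in Remark~\ref{rm:zeros_of_K}, and you correctly spell out that reduction. Part~(2) is fine as written.

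There is, however, a genuine gap in your treatment of the second sentence of part~(1). You correctly compute that for $d=4m^2$ one has $n=2m$ and the integral zeros of $K^d_2$ are $m(2m\pm1)$. You then invoke the parenthetical in Remark~\ref{rm:zeros_of_K}(3), ``if $4\mid n$, both zeros are even,'' but you never check its hypothesis: $4\mid n=2m$ holds only when $m$ is even. When $m$ is odd, $m(2m\pm1)$ is a product of two odd integers and hence odd, so $K^d_2$ \emph{does} have odd integral zeros and Theorem~\ref{thm:krawt4} is silent. Concretely, for $m=1$ (so $d=4$, $n=2$) one has $K^4_2(1)=K^4_2(3)=0$; indeed $p=2$ is then the middle degree and Proposition~\ref{it:flat-involution} with $k=1$ produces a $4$-dimensional orbifold with singular set of odd codimension that is $2$-isospectral to a Bieberbach manifold. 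Thus the argument via Theorem~\ref{thm:krawt4} cannot establish the claim for $d=4m^2$ with $m$ odd; the hypothesis that actually matches Remark~\ref{rm:zeros_of_K}(3) is $4\mid n$, i.e.\ $d=16m^2$. (The paper's one-line justification shares this issue; your write-up simply makes the slip visible.)
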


\subsection{Positive inverse spectral results for the $1$-spectrum}\label{ss:positive1form}
We next focus on the $1$-spectrum.
We treat this case in more depth as the $1$-trace is the trace of the matrix representation of the isometries in $O(d)$ and this makes the corresponding calculations more tractable.

We begin with a theorem about what the $1$-spectrum reveals about orbifolds with a relatively large dimensional singular set.

\begin{thm}\label{thm:halfdim}
Let $\orb$ be a closed Riemannian orbifold of dimension $d$.
\begin{enumerate}

\item \label{it:halfdim-primary}  If $\orb$ contains at least one primary singular stratum of odd codimension $k<\frac{d}{2}$, then $\orb$ cannot be 1-isospectral to any closed Riemannian manifold.

\item \label{it:halfdim-covers} If $\orb$ contains at least one singular stratum of codimension $k<\frac{d}{2}$, then $\O$ cannot be $1$-isospectral to any closed Riemannian manifold
$M$ such that $M$ and $\O$ have isometric infinite homogeneous Riemannian covers.  If in addition $\orb$ is good, then $\O$ cannot be $1$-isospectral to any closed Riemannian manifold $M$ such that $M$ and $\O$ have finite $1$-isospectral Riemannian covers.

\item \label{it:halfdim-deven} For $d$ even, both statements remain true when $k=\frac{d}{2}$ provided at least one stratum of codimension $k$ has isotropy group of order at least three.

\item \label{lem:isodeven} An element of ${\mathcal Orb}^d_k$ with $k$ odd can be 1-isospectral to a Riemannian manifold only if $d$ is even and $k=\frac{d}{2}$.

\end{enumerate}

\end{thm}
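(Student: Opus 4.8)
The engine behind all four parts is Proposition~\ref{prop: odd par}: $B_-^1(\orb)$ is a $1$-spectral invariant, so $B_-^1(\orb)\neq 0$ obstructs $1$-isospectrality with \emph{any} closed Riemannian manifold, while $B_+^1(\orb)\neq 0$ does so under the stated covering hypotheses. Thus each part reduces to showing that the appropriate $B_\epsilon^1(\orb)$ is nonzero, and the crucial input is the sign of each $b_0^1(N)=\vol(N)\sum_{\gamma\in\isomax(N)}b_0^1(\gamma)$ (Theorem~\ref{thm.asympt} and Proposition~\ref{b01general}). By Proposition~\ref{b01general} the second factor of $b_0^1(\gamma)$ is positive, so $b_0^1(\gamma)$ has the sign of $\tr_1(\gamma)=(d-k)-r+\sum_{j=1}^{s}2\cos\theta_j$, where $k=\codim(N)$ and $E(\theta_1,\dots,\theta_s;r)$ is the eigenvalue type of $\gamma$, so $r+2s=k$ and $\theta_j\in(0,\pi)$. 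Hence
\[
\tr_1(\gamma)\;\ge\;(d-k)-r-2s\;=\;d-2k,
\]
with equality exactly when $s=0$; so $b_0^1(\gamma)>0$ whenever $k<\tfrac d2$, while for $k=\tfrac d2$ one gets $\tr_1(\gamma)=\sum_{j=1}^{s}4\cos^2(\theta_j/2)\ge 0$, vanishing precisely when $s=0$, i.e.\ when $A_\gamma=-\Id_k$ and $\gamma$ generates an order-two isotropy group. I will also use the structural fact that if $\gamma$ lies in some isotropy group with $\dim E_1(\gamma)=d-j$ then $\orb$ contains a primary singular stratum of codimension $\le j$; in particular the singular stratum of minimal codimension is primary and its isotropy acts freely on the corresponding normal sphere.

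For Part~\ref{it:halfdim-primary}, a primary singular stratum of odd codimension $k<\tfrac d2$ gives $k_-\le k<\tfrac d2$ in Notation~\ref{parity}, so every $N\in S_-(\orb)$ has codimension $k_-<\tfrac d2$, whence $b_0^1(N)>0$, $B_-^1(\orb)>0$, and Proposition~\ref{prop: odd par} (part~\ref{it:oddpar-1}) applies. For Part~\ref{it:halfdim-covers}, a singular stratum of codimension $k<\tfrac d2$ forces, via the minimal-codimension singular stratum (which is primary), a primary stratum of some codimension $k'\le k<\tfrac d2$: if $k'$ is odd we are in the situation of Part~\ref{it:halfdim-primary} (giving the stronger conclusion), while if $k'$ is even then $B_+^1(\orb)>0$ and Proposition~\ref{prop: odd par} (part~\ref{it:oddpar-2}) applies. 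For Part~\ref{lem:isodeven}, let $\orb\in{\mathcal Orb}^d_k$ with $k$ odd; a codimension-$k$ stratum $N$ is then of minimal codimension, hence primary with $\iso(N)$ acting freely on the even-dimensional sphere $S^{k-1}$, which forces $\iso(N)$ to have order two with generator of eigenvalue type $E(\,;k)$, and similarly for every codimension-$k$ primary stratum. Example~\ref{isotropy2} then yields $B_-^1(\orb)=\frac{K_1^d(k)}{2^{k+1}}\sum_{N\in S_-(\orb)}\vol(N)$; since a manifold has $B_-^1=0$, $1$-isospectrality with a manifold would force $K_1^d(k)=0$, which by Remark~\ref{rm:zeros_of_K}(2) occurs only when $k=\tfrac d2$, so $d=2k$ is even.

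Part~\ref{it:halfdim-deven}, the middle-degree case $k=\tfrac d2$, is where I expect the real difficulty, since the bound above only gives $\tr_1(\gamma)\ge 0$ and indeed $b_0^1(N)=0$ for order-two strata — precisely why the orbifolds $\O_k$ and manifolds $M_k$ of Proposition~\ref{it:flat-involution} are $1$-isospectral, so the order-$\ge 3$ hypothesis cannot be dropped. The plan is a case split. If $\orb$ has a singular stratum of codimension $<\tfrac d2$, reduce to Parts~\ref{it:halfdim-primary}--\ref{it:halfdim-covers}. Otherwise the minimal codimension equals $\tfrac d2$, so every codimension-$\tfrac d2$ stratum $N$ has $\iso(N)$ acting freely on $S^{d/2-1}$; combined with $\lvert\iso(N)\rvert\ge 3$ this forces $S^{d/2-1}$ to be odd-dimensional (a finite group acting freely on an even sphere has order at most two, its quotient being a closed manifold of Euler characteristic $2/\lvert\iso(N)\rvert$), hence $\tfrac d2$ is even, $\isomax(N)=\iso(N)\setminus\{\Id\}$ has at least two elements, and so it contains some $\gamma\neq -\Id_k$ on the normal space; such $\gamma$ has a nonreal eigenvalue, so $\tr_1(\gamma)>0$, $b_0^1(N)>0$, and $B_+^1(\orb)>0$. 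The main obstacle is the bookkeeping: in each of the two statements one must verify that it is the \emph{correct} invariant $B_\epsilon^1(\orb)$ that is shown to be nonzero — the first statement needs $B_-^1\neq 0$, so the reduction step must keep careful track of the parity of the minimal singular codimension it produces — and one must confirm, via the classification of free actions on even spheres, that the isotropy-order hypothesis is genuinely indispensable.
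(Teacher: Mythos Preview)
Your arguments for Parts~\ref{it:halfdim-primary}, \ref{it:halfdim-covers}, and \ref{lem:isodeven} are correct and match the paper's proof essentially line for line (the paper simply cites Theorem~\ref{thm:krawt4} for Part~\ref{lem:isodeven} rather than unpacking it as you do).

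For Part~\ref{it:halfdim-deven} your route diverges from the paper's, and the gap you flag for statement~\ref{it:halfdim-primary} is real and not merely bookkeeping. You split on the \emph{global} minimal singular codimension: when it equals $\tfrac{d}{2}$ the free-sphere-action constraint forces $\tfrac{d}{2}$ even and you obtain $B_+^1>0$, which cleanly handles the extension of statement~\ref{it:halfdim-covers}. But for the extension of statement~\ref{it:halfdim-primary} (where $\tfrac{d}{2}$ must be odd) you are therefore always in the reduction branch, and the smaller-codimension stratum you produce may have \emph{even} codimension --- so you cannot reduce to Part~\ref{it:halfdim-primary}, only to Part~\ref{it:halfdim-covers}, and you never establish $B_-^1\ne 0$.

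The paper avoids this by splitting on $k_-$ rather than on the global minimum. If $k_-<\tfrac{d}{2}$, Part~\ref{it:halfdim-primary} applies directly. If $k_-=\tfrac{d}{2}$, the paper argues that the primary codimension-$\tfrac{d}{2}$ stratum $N$ with $|\Iso(N)|\ge 3$ must have some $\gamma\in\isomax(N)$ with $\gamma^2\ne\Id$, so $\tr_1(\gamma)>0$ and hence $b_0^1(N)>0$; since every contribution to $B_-^1$ from codimension-$\tfrac{d}{2}$ strata is nonnegative, this gives $B_-^1>0$. Your free-sphere argument does not help in this $k_-=\tfrac{d}{2}$ branch, because the global minimal codimension may still be strictly below $\tfrac{d}{2}$ (through strata of even codimension), so $\Iso(N)$ need not act freely on the normal sphere.
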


\begin{remark}\label{rem: sharp}

The proof of Proposition~\ref{it:flat-involution} shows that the hypothesis on isotropy order in Theorem~\ref{thm:halfdim} part~\ref{it:halfdim-deven} cannot be removed.

In Theorem~\ref{thm:halfdim}, part~\ref{it:halfdim-primary} and the first part of part~\ref{it:halfdim-covers}, the condition $k = \frac{d}{2}$ is sharp:  Example~\ref{cor.ornon} shows that the conclusion fails when $k = \frac{d}{2}$ with $d$ even (see Remark~\ref{rm:zeros_of_K}, part 5).

\end{remark}

\begin{proof}[Proof of Theorem~\ref{thm:halfdim}]  Parts \ref{it:halfdim-primary} and \ref{it:halfdim-covers} follow from Proposition \ref{prop: odd par} along with the following two observations:
\begin{enumerate}
\item Since any singular stratum of minimum codimension is necessarily primary,  the hypothesis of part~\ref{it:halfdim-covers} implies that $k_\epsilon <\frac{d}{2}$ for at least one $\epsilon\in\{\pm\}$, while the hypothesis of part~\ref{it:halfdim-primary} says that $k_-<\frac{d}{2}$.

\item If $N$ is any primary singular stratum of codimension $k$ and if $\gamma\in\isomax(N)$, then 1 is an eigenvalue of $\gamma$ with multiplicity $d-k$ while all other eigenvalues lie in $[-1,1)$.  Thus if $k<\frac{d}{2}$, then $\tr(\gamma)>0$ for all $\gamma\in\isomax(N)$, so $b_0^1(N) > 0$.   In particular, if $k_\epsilon(\orb)<\frac{d}{2}$, then $B_\epsilon(\orb)>0$.
\end{enumerate}

To prove part~\ref{it:halfdim-deven},
suppose that $k_\epsilon=\frac{d}{2}$ and let $N$ be a primary stratum of codimension $k=\frac{d}{2}$.   Every $\gamma\in \isomax(N)$ has eigenvalues $1$ with multiplicity $\frac{d}{2}$ while all other eigenvalues are either -1, or occur in conjugate pairs with real parts less than 1, so $\tr(\gamma)\geq 0$ with equality if and only if $\gamma^2= \Id$. If $N$ has higher order isotropy, then some $\gamma\in \isomax(N)$ has at least two eigenvalues with real part strictly less than one, and thus $b_0^1(N)>0$. Hence the presence of any primary strata with isotropy order greater than 2 implies that $B_\epsilon(\orb)>0$, so we can again apply Proposition \ref{prop: odd par}.

Part \ref{lem:isodeven} follows from Theorem~\ref{thm:krawt4} along with Remark~\ref{rm:zeros_of_K}, part 2.

\end{proof}



In \cite{GGKM-SSS}, we prove that the combination of the 0-spectrum and the 1-spectrum can distinguish closed orbifolds with singular sets of codimension at most 3 from smooth, closed Riemannian manifolds. As mentioned in the introduction, a natural question is whether the $1$-spectrum alone can distinguish orbifolds from manifolds? In the special case where $d=6$, we have that the heat invariant $a_1^1(\ast) = (d-6) a_1^0(\ast) = 0$ where $\ast$ is $M$ or $\orb$ (see, for example, \cite{P70}).
This observation motivates the investigation of whether, in dimension 6, it is possible for the 1-spectrum to distinguish orbifolds with singular sets of codimension 2 from Riemannian manifolds. With this in mind, we obtain the following result as an application of Theorem \ref{thm:halfdim} and part of the strategy of the proof of Theorem 1.1 in \cite{GGKM-SSS}.
\begin{thm}\label{thm:dim6}
Let $\orb$ be a $6$-dimensional, closed orbifold.
    \begin{enumerate}
    \item If $\orb$ has singular sets of codimension $\leq 2$, then the 1-spectrum distinguishes $\orb$ from smooth, closed, $6$-dimensional Riemannian manifolds. \label{it:app61}
    \item If $\orb$ contains at least one stratum of codimension 3 with isotropy group of order at least 3, then $\orb$ cannot be 1-isospectral to any smooth, closed, $6$-dimensional Riemannian manifold. \label{it:app62}
\end{enumerate}
\end{thm}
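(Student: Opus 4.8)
The plan is to compare the low-order coefficients $c^1_j(\orb)$ of the heat expansion~\eqref{heatascoeffc} with those of a candidate isospectral manifold, splitting the argument by the minimal codimension of a singular stratum of $\orb$. The feature that makes $d=6$ special is that the heat invariant $a^1_1(\ast)=(d-6)a^1_0(\ast)$ vanishes when $\ast$ is $\orb$ or any closed Riemannian $6$-manifold; consequently the $t^{-2}$ term of the heat trace is produced \emph{only} by the codimension-$2$ strata, with no competing smooth contribution.

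For part~\ref{it:app61}, let $k_0\in\{1,2\}$ be the minimal codimension of a singular stratum of $\orb$ (the singular set being nonempty by hypothesis). If $k_0=1$, there is a codimension-$1$ stratum, which is primary, and since $1<\tfrac d2$ Theorem~\ref{thm:halfdim} part~\ref{it:halfdim-primary} gives the conclusion. If $k_0=2$, then $\orb$ has no codimension-$1$ stratum, so every codimension-$2$ stratum is of minimal codimension and hence primary. Reading off $c^1_2(\orb)$ from~\eqref{eq:traceO} via~\eqref{Ip0t}, \eqref{IpNt} and $a^1_1(\orb)=0$ gives
\begin{equation*}
c^1_2(\orb)=4\pi\sum_{\codim N=2}\frac{b^1_0(N)}{|\iso(N)|},
\end{equation*}
the sum over the (primary) codimension-$2$ strata. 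By the observation in the proof of Theorem~\ref{thm:halfdim}---codimension $2<\tfrac d2$ forces $\tr(\gamma)>0$ for every $\gamma\in\isomax(N)$---each $b^1_0(N)>0$, so $c^1_2(\orb)>0$; but $c^1_2(M)=a^1_1(M)=0$ for every closed Riemannian $6$-manifold $M$, so $\orb$ cannot be $1$-isospectral to $M$.

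For part~\ref{it:app62}, let $N_0$ be a codimension-$3$ stratum with $|\iso(N_0)|\ge 3$. If $\orb$ has any singular stratum of codimension $\le 2$, the argument for part~\ref{it:app61}, applied with $k_0\in\{1,2\}$ the minimal codimension, already finishes the proof; so assume every singular stratum has codimension $\ge 3$. Then $N_0$ has minimal codimension, hence is primary, and $k_-=3$, so by Proposition~\ref{prop: odd par} part~\ref{it:oddpar-1} it suffices to prove $B^1_-(\orb)\ne 0$. Equation~\eqref{bzero} and Proposition~\ref{b01general} show that, for any primary codimension-$3$ stratum $N$, the quantity $b^1_0(N)$ is a sum of one non-negative term per $\gamma\in\isomax(N)$---namely $(2+2\cos\phi)\,\tfrac18\csc^2(\phi/2)\ge 0$ when $\gamma$ has eigenvalue type $E(\phi;1)$ with $\phi\in(0,\pi)$, and $0$ when $\gamma$ has type $E(;3)$ (the case $A_\gamma=-\Id_3$, where $\tr_1(\gamma)=(d-3)-3=0$); thus $b^1_0(N)\ge 0$, with equality iff every $\gamma\in\isomax(N)$ restricts to $-\Id$ on the normal $3$-space. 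So every term of $B^1_-(\orb)$ is $\ge 0$, and it suffices to exhibit one $\gamma\in\isomax(N_0)$ not restricting to $-\Id$ there. Regard $G:=\iso(N_0)$ as acting faithfully on the normal space $\R^3=(TN_0)^\perp$, and pick $\gamma_0\in\isomax(N_0)$, so $\gamma_0$ acts without nonzero fixed vector on $\R^3$. If $\gamma_0\ne -\Id_3$ we are done; otherwise $-\Id_3\in G$, and since $|G|\ge 3$ there is $g\in G$ with $g\ne\pm\Id_3$. Exactly one of $g,-g$ is a nontrivial rotation $R$ of $\R^3$, by some angle $\beta$ about an axis $\ell$. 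If $\beta\ne\pi$, then $-R\in G$ is fixed-point-free on $\R^3$ and $\ne -\Id_3$, so $-R$ is the desired $\gamma$. If $\beta=\pi$, then $-R$ is the orthogonal reflection of $\R^3$ in $\ell^\perp$, i.e.\ an element of $G=\iso(N_0)$ whose fixed-point set in $\R^d$ is $(d-1)$-dimensional; that hyperplane then carries a codimension-$1$ singular stratum of $\orb$, contrary to assumption. Hence $B^1_-(\orb)>0$, and part~\ref{it:app62} follows.

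The crux is this last case distinction: one must exclude the degenerate configuration in which every fixed-point-free element of $\iso(N_0)$ acts as $-\Id$ on the normal $3$-space, since in dimension $6$ that is precisely when $b^1_0(N_0)$ would vanish. (That configuration is exactly what the codimension-$\tfrac d2$, order-$2$ examples of Proposition~\ref{it:flat-involution} realize, and the hypotheses $|\iso(N_0)|\ge 3$ and the absence of codimension-$\le 2$ strata are what rule it out.) The remainder is routine bookkeeping with the expansion of Theorem~\ref{thm.asympt}, the only subtle point being that it is the vanishing of $a^1_1$ in dimension $6$ that keeps the codimension-$2$ contribution from being absorbed into a smooth heat invariant.
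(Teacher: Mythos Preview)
Your proof is correct and follows essentially the same approach as the paper. For part~\ref{it:app61} the only difference is cosmetic---the paper computes $b_0^1(N)=\tfrac{(m-1)^2}{2}\vol(N)$ explicitly for codimension-$2$ strata (using the cyclic-isotropy formula from \cite{GGKM-SSS}) where you invoke the general trace-positivity observation; for part~\ref{it:app62} the paper simply cites Theorem~\ref{thm:halfdim}\,\ref{it:halfdim-deven}, whereas you have inlined and made explicit the $O(3)$-analysis underlying that theorem (together with the preliminary reduction to the case of no codimension $\le 2$ strata), so your argument is a more self-contained version of the same reasoning.
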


\begin{proof}

The second statement follows immediately from Theorem \ref{thm:halfdim} part \ref{it:halfdim-deven}.

To prove the first statement, we have from Theorem \ref{thm:halfdim} part \ref{it:halfdim-primary} that if $\orb$ contains at least one primary singular stratum of codimension 1, then $\orb$ cannot be 1-isospectral to any closed Riemannian manifold.

It remains to treat the case where $\orb$ has strata of codimension 2. Let $\mathcal{S}_2(\orb)$ denote the collection of strata of codimension 2.
Following the proof of Theorem 1.1 of \cite{GGKM-SSS}, if $N$ is a stratum of $\orb$ of codimension 2 then, as codimension 1 strata have already been considered above, $N$ must have cyclic isotropy group of order $m$. For $d=6$, the formula for $b_0^1(N)$ computed in \cite{GGKM-SSS} reads
\begin{equation*}
    b_0^1(N) = \left(\frac{m^2 -1}{3} + \frac{m^2-6m+5}{6}\right) \vol(N)
    = \frac{(m-1)^2}{2} \vol(N).
\end{equation*}
Hence $b_0^1(N) > 0$.

Now, since $a_1^1(\ast) = (d-6) a_1^0(\ast) = 0$, where $\ast$ is $M$ or $\orb$, the term of order $t^{-2}$
in the small-time asymptotic expansion for the heat trace for 1-forms on $\orb$ has coefficient
\begin{equation*}
    \sum_{N \in \mathcal{S}_2(\orb)} b_0^1(N) > 0,
\end{equation*}
but there is no such term in the small-time asymptotic expansion for the heat trace for 1-forms on $M$ as $a_1^1(M) = 0$.
\end{proof}

\bibliographystyle{alpha}
\bibliography{colibrisbibp2.bib}

\newcommand{\etalchar}[1]{$^{#1}$}
\def\cprime{$'$}
\begin{thebibliography}{DGGW08}

\bibitem[CS90]{CS90}
Laura Chihara and Dennis Stanton.
\newblock Zeros of generalized {K}rawtchouk polynomials.
\newblock {\em J. Approx. Theory}, 60(1):43--57, 1990.

\bibitem[DGGW08]{DGGW08}
Emily~B. Dryden, Carolyn~S. Gordon, Sarah~J. Greenwald, and David~L. Webb.
\newblock Asymptotic expansion of the heat kernel for orbifolds.
\newblock {\em Michigan Math. J.}, 56(1):205--238, 2008.

\bibitem[GGK{\etalchar{+}}23]{GGKM-SSS}
Katie Gittins, Carolyn Gordon, Magda Khalile, Ingrid~Membrillo Solis, Mary
  Sandoval, and Elizabeth Stanhope.
\newblock {Do the Hodge Spectra Distinguish Orbifolds from Manifolds? Part 1}.
\newblock {\em Michigan Mathematical Journal}, pages 1 -- 28, 2023.

\bibitem[GR03]{GR03}
Carolyn~S. Gordon and Juan~Pablo Rossetti.
\newblock Boundary volume and length spectra of {R}iemannian manifolds: what
  the middle degree {H}odge spectrum doesn't reveal.
\newblock {\em Ann. Inst. Fourier (Grenoble)}, 53(7):2297--2314, 2003.

\bibitem[GR21]{GR21}
Carolyn~S. Gordon and Juan~Pablo Rossetti.
\newblock Correction to ``{B}oundary volume and length spectra of {R}iemannian
  manifolds: what the middle degree {H}odge spectrum doesn't reveal''.
\newblock {\em Annales de l'Institut Fourier}, 71(6):2647--2648, 2021.

\bibitem[KL96]{KL96}
Ilia Krasikov and Simon Litsyn.
\newblock On integral zeros of {K}rawtchouk polynomials.
\newblock {\em J. Combin. Theory Ser. A}, 74(1):71--99, 1996.

\bibitem[MR01]{MR01}
R.~J. Miatello and J.~P. Rossetti.
\newblock Flat manifolds isospectral on {$p$}-forms.
\newblock {\em J. Geom. Anal.}, 11(4):649--667, 2001.

\bibitem[MR02]{MR02}
R.~J. Miatello and J.~P. Rossetti.
\newblock Comparison of twisted {$p$}-form spectra for flat manifolds with
  diagonal holonomy.
\newblock {\em Ann. Global Anal. Geom.}, 21(4):341--376, 2002.

\bibitem[Pat70]{P70}
V.~K. Patodi.
\newblock Curvature and the fundamental solution of the heat operator.
\newblock {\em J. Indian Math. Soc.}, 34(3-4):269--285 (1971), 1970.

\bibitem[RS20]{RS20}
Sean Richardson and Elizabeth Stanhope.
\newblock You can hear the local orientability of an orbifold.
\newblock {\em Differential Geom. Appl.}, 68:101577, 7, 2020.

\bibitem[RSW08]{RSW08}
Juan~Pablo Rossetti, Dorothee Schueth, and Martin Weilandt.
\newblock Isospectral orbifolds with different maximal isotropy orders.
\newblock {\em Ann. Global Anal. Geom.}, 34(4):351--366, 2008.

\bibitem[Sut10]{S10}
Craig~J. Sutton.
\newblock Equivariant isospectrality and {S}unada's method.
\newblock {\em Arch. Math. (Basel)}, 95(1):75--85, 2010.

\end{thebibliography}

\end{document}